\setlist{nolistsep}
\numberwithin{equation}{section}
\newtheorem{thm}{Theorem}[section]
\newtheorem{lemma}[thm]{Lemma}
\newtheorem{prop}[thm]{Proposition}
\newtheorem{defin}[thm]{Definition}
\theoremstyle{remark}
\newtheorem{rmk}[thm]{Remark}
\theoremstyle{remark}
\numberwithin{thm}{section}
\newcommand{\bt}{\textbf}
\newcommand{\de}{\partial}
\newcommand{\wt}{\widetilde}
\newcommand{\Z}{\mathbb{Z}}
\newcommand{\R}{\mathbb{R}}
\newcommand{\C}{\mathbb{C}}
\newcommand{\T}{\mathbb{T}}
\newcommand{\Tb}{\mathbb{T}}
\newcommand{\A}{\mathcal{A}}
\newcommand{\B}{\mathcal{B}}
\newcommand{\eps}{\varepsilon}
\renewcommand{\H}{\mathcal{H}}
\newcommand{\id}{\mathrm{id}}
\renewcommand{\t}{\mathfrak{t}}
\title{\vspace{-1cm}Dirac operators\\
 on noncommutative principal torus bundles}
\date{}
\author{Alessandro Zucca and Ludwik D\k {a}browski${}^{*}$\\ 
\vbox{
\small
\begin{center}
SISSA (Scuola Internazionale Superiore di Studi Avanzati), \\
via Bonomea 265, 34136 Trieste, Italy \\ \ \\
${}^{*}$ Partially supported by PRIN 2010-11 "Operator Algebras, Noncommutative Geometry and Applications".\\
\end{center}
}
}
\begin{document}

\maketitle

\begin{abstract}
Spectral triples over noncommutative principal $\T^n$-bundles are studied, extending recent results about the noncommutative geometry of principal $U(1)$-bundles. We relate the noncommutative geometry of the total space of the bundle with the geometry of the base space. Moreover, strong connections are used to build new Dirac operators. 
We discuss as a particular case the commutative case, the noncommutative tori 
and theta deformed manifolds. 
\end{abstract}

\thispagestyle{empty}

\section{Introduction}
In \cite{AB98,A98} B. Ammann and C. B\"ar discussed the properties of the Riemannian spin geometry of a smooth principal $U(1)$-bundle, relating, under suitable hypotheses, the spin structure and the spinor Dirac operator on the total space of the bundle to the spin structure and the Dirac operator on the base space. A noncommutative generalization of these results was proposed and developed in \cite{DS10,DSZ13} by introducing a notion of projectability for $U(1)$-equivariant \cite{S03} real spectral triples \cite{C94,Bondia} over a noncommutative principal $U(1)$-bundle and showing how it is possible to twist the Dirac operator of a projectable spectral triple using a strong connection. Moreover, a notion of compatibility between a connection and the Dirac operator on the total space of the bundle was introduced.
To the best of our knowledge, Dirac operators on principal bundles with higher dimension structure groups have not been worked out yet. 
In this work we extend the study of $U(1)$-bundles  
to the case of principal $\T^n$-bundles ($\T^n=U(1)^{\times n}$),
working from the beginning with spectral triples and on the noncommutative level. 
In particular we introduce certain twisted Dirac operators, finding in this way a class of new Dirac operators.
We discuss as a particular case the commutative case, the noncommutative tori 
(with the noncommutative 3-torus, seen as a principal $\T^2$-bundle over the circle, 
as an explicit example), and theta deformed manifolds.

\section{Noncommutative principal $\T^n$-bundles}
In noncommutative geometry the algebra of functions (of certain regularity) over a topological space is replaced by a suitable noncommutative algebra.
In an analogue fashion, a Hopf algebra $H$ 
is usually considered as noncommutative counterpart of a group. 
Then a noncommutative principal bundle should be a particular $H$-comodule algebra,
supplemented by additional structure \cite{BM93,Dur93,Hajac96,BM98,DGH99}; 
here we shall consider the recently elaborated notion of principal comodule algebra \cite{HKMZ07,BZ11}.

\begin{defin}\label{defprinccomodalg}
 Let $H$ be a Hopf algebra with invertible antipode $S$ and counit $\eps$. Then a right $H$-comodule algebra $\A$, with multiplication map $m : \A\otimes\A\rightarrow\A$ and coaction $\Delta_R : \A\rightarrow \A\otimes H$, is called a \emph{principal $H$-comodule algebra} 
if it admits a linear map $\ell : H\rightarrow \A\otimes \A$ such that:
 \begin{description}
 \item[(i)] $\ell(1) = 1\otimes 1$,
 \item[(ii)] $m\circ\ell = \eps$,
 \item[(iii)] $(\ell\otimes\id)\circ\Delta = (\id\otimes\Delta_R)\circ\ell$,
 \item[(iv)] $(S\otimes\ell)\circ\Delta = (\sigma_{\A\otimes H}\otimes\id)\circ(\Delta_R\otimes\id)\circ\ell$,
\end{description}
where $\sigma_{\A\otimes H} : \A\otimes H\rightarrow H\otimes \A$ 
is the switch $\sigma_{\A\otimes H}(a\otimes h) = h\otimes a$.
\end{defin}
We shall understand a principal $H$-comodule algebra $\A$ as a quantum principal bundle over 
the coinvariant subalgebra $\B:= \{a\in\A\;|\;\Delta_R(a) = a\otimes 1\}$ of $\A$,
and often denote it simply by the inclusion map $\B \hookrightarrow \A$. 
In particular one can prove that 
then $\A$ is a \emph{Hopf-Galois extension} \cite{Hajac96} of $\B$.
This means that is the so-called \emph{canonical map} 
$T_R : \A\otimes_\B \A\rightarrow \A\otimes H$,
$$T_R(a\otimes b) = ab_{(0)}\otimes b_{(1)}$$
is a bijection 
(here and later on we use Sweedler's notation $\Delta_R(a) = a_{(0)}\otimes a_{(1)} \in \A\otimes H$ for the right coaction of $H$ on $\A$).

Moreover, the map $\ell$ 
determines a {\em strong connection form} (for the universal calculus on $\A$)
by setting:
$$\omega : H\rightarrow \Omega^1\A, \quad\quad\omega(h) = \ell(h) - \eps(h).$$
It will be adapted in section \ref{4.1} to the case of the Dirac-operator induced 
differential calculus, and represented as an operator on a Hilbert space.

In this paper $H$ will be the Hopf algebra $H(\T^n)$ of the Lie group $\T^n= U(1)^n$.
It is the polynomial complex unital $^*$-algebra generated by $n$ commuting unitaries 
$z_1,\ldots,z_n$, together with the algebra maps $\Delta$, $S$, $\eps$ 
(coproduct, antipode and counit) defined by:
 $$\Delta(z_i) = z_i\otimes z_i,\quad\quad S(z_i) = z_i^* = z_i^{-1},\quad\quad \eps(z_i) = 1.$$
We introduce the following notation: for $k\in\Z^n$ we set 
$\displaystyle z^k = \prod_{i=1}^n z_i^{k_i}$. 

Consider now a principal comodule algebra $\A$. Let 
$$\A = \bigoplus_{k\in\Z^n} \A^{(k)}$$
be a direct sum decomposition into homogeneous subspaces of the coaction
$$a\in \A^{(k)}\quad\Leftrightarrow\quad \Delta(a) = a\otimes z^k.$$
This corresponds to 
the action of $\T^n$ and of its Lie algebra $\t_n$ on $\A$
$$\delta_j(a) = k_ja, \quad\; \forall a\in \A^{(k)},$$
where $\delta_1,\ldots,\delta_n$ are the usual generators of $\t_n$.

Remark. Actually remaining on the level of (Hopf) algebras we would need here 
only an action of some Hopf algebra $U$ with a nondegenerate dual pairing with $H$. 
This is clearly fulfilled in the case at hand 
by  $U$ given by the universal enveloping algebra of  $\t_n$.
In the following sections we shall assume however that all the relevant structures
discussed on the algebraic level extend to suitable completions,
e.g the coaction of $H$ on $\A$ comes from the strongly continuous unitary action of $T^n$. 
One reason that we keep the coactions of  $H$ besides
the actions of $\T^n$ is for the property of their freenes on the topological ($C^*$-algebra) level,
see \cite{Wor87,BDH13}.

\section{Spectral triples over quantum principal $\T^n$-bundles}\label{3}
Consider now a real spectral triple \cite{C94,Bondia} $(\A,\H,D,J,\gamma)$ 
over the algebra $\A$ (if the triple is odd we set $\gamma = \id$). We denote by $\pi$ the representation of $\A$ on $\H$, but we shall often omit it, writing simply $a\xi$ for $\pi(a)\xi$. 
Recall that then the Dirac operator $D$ has bounded commutators $[D, a]$ 
with any $a\in\A$, and $D^{-1}$ is a compact operator 
on the orthogonal complement of kernel of $D$.
The adjoint action of $J$ maps $\A$ into its commutant and moreover
the following commutation relations between $J$, $D$ and, in the even case, $\gamma$ 
are satisfied
$$J^2 = \epsilon, \quad JD = \epsilon'DJ, \quad J\gamma = \epsilon''\gamma J,$$
depend on the so-called $KR$-dimension of the triple \cite{C94,Bondia},
\begin{equation}\label{tableKRdim}
 \begin{array}{|c|c|c|c|c|}
  \hline
  KR\mathrm{-dim} & 0 & 2 & 4 & 6 \\ \hline
  \epsilon & + & - & - & + \\ \hline
  \epsilon' & + & + &+ & + \\ \hline
  \epsilon'' & + & - & + & - \\ \hline
 \end{array}\quad\quad
\begin{array}{|c|c|c|c|c|}
  \hline
  \mathrm{KR-dim} & 1 & 3 & 5 & 7 \\ \hline
  \epsilon & + & - & - & + \\ \hline
  \epsilon' & - & + & - & + \\ \hline
 \end{array}
\, .
\end{equation}
We shall require moreover that 
the \emph{first order condition} is satisfied; that is, for any $a,b\in\A$,
$$[[D,a],Jb^*J^{-1}] = 0.$$
We recall here that the real structure $J$ induces a right action of $\A$ on $\H$, which commutes with the ordinary representation of $\A$: $\xi\cdot a = Ja^*J^{-1}\xi$, for $\xi\in\H$, $a\in\A$. This action can also be seen as a left action (hence, a representation) of the opposite algebra $\A^\circ$. Furthermore, the Dirac operator $D$ allows to represent differential 1-forms over $\A$ as (bounded) operators on $\H$,
$$\pi_D\bigg(\sum adb \bigg) = \sum a[D,b],$$
and the real structure $J$ induces also a right action of differential forms: $\xi\cdot adb = J\pi_D(adb)^*J^{-1}$. Moreover $\pi_D$ can be used to define a bimodule of differential forms over $\A$. Indeed, we can set
$$\Omega^1_D(\A) = \bigg\{\sum a[D,b]\; \bigg|\; a,b\in\A\bigg\}.$$
We shall assume that the noncommutative spin geometry on $\A$ associated to this spectral triple is invariant with respect to the action of $\T^n$. This can be translated into a condition of equivariance of the spectral triple \cite{S03}.
\begin{defin}
 A \emph{$\T^n$-equivariant real spectral triple} over the algebra $\A$ is a real spectral triple $(\A,\H,D,J,\gamma)$ ($\gamma=\id$ if the triple is odd) together with commuting selfadjoint operators $\delta_j:\H\rightarrow\H$, for $j=1,\ldots,n$, with (common) domain of selfadjointness stable under the action of $\A$, which extend the operators $\delta_j:\A\rightarrow\A$,
$$\delta_j(\pi(a)\psi) = \pi(\delta_j(a))\psi + \pi(a)\delta_j(\psi),$$
and such that
$$\delta_jJ + J\delta_j = 0,\quad\quad [\delta_j,\gamma] = 0,\quad\quad [\delta_j,D] = 0.$$
\end{defin}
\begin{rmk}
 We shall assume that the spectrum of each $\delta_j$ is equal to $\Z$. 
The geometric meaning of this assumption is 
that the action of $\T^n$ on the total space of the bundle lifts to an action 
and not to a projective action on the spinor bundle.
\end{rmk}

Now, if $(\A,\H,D,J,\gamma,\{\delta_j\})$ is a $\T^n$-equivariant real spectral triple, 
the Hilbert space $\H$ splits according to the spectrum of the operators $\delta_j$,
$$\H = \bigoplus_{k\in\Z^n}\H_k,$$
and this decomposition is preserved by the Dirac operator $D$. Moreover, for any $k,l\in\Z^n$,
$\pi(\A^{(k)})\H_l\subseteq\H_{k+l}$. In particular $\H_0$ is stable under the action of the invariant 
subalgebra $\B = \A^{coH} = \A^{(0)}$.

Now we can introduce a notion of projectability for $\T^n$-equivariant spectral triples.
We have to treat separately the odd dimensional and the even dimensional case. 
We begin with the former.

\subsection{Projectable spectral triples: odd case}
\begin{defin}\label{defTnprojodd}
An odd $\T^n$-equivariant real spectral triple $(\A,\H,D,J,\{\delta_j\})$, of $KR$-dimension $n+m$, 
is said to be \emph{projectable} along the fibres if there exists a $\Z_2$ grading $\Gamma$ on $\H$,
which satisfies the following conditions,
$$\Gamma^2 = 1,\quad\quad \Gamma^* = \Gamma,$$
$$[\Gamma,\pi(a)]=0 \quad\forall a\in\A,$$
$$[\Gamma,\delta_j] = 0 \quad\;\mathrm{for}\; j=1,\ldots,n,$$
\begin{equation*}
 J\Gamma = \left\{\begin{array}{lcl}
  \Gamma J & \quad & \mathrm{if} \;\; m\equiv 0\; (\mathrm{mod}\; 4), \\
  -\Gamma J & \quad & \mathrm{otherwise}.
 \end{array}\right.
\end{equation*}
We define the \emph{horizontal} Dirac operator $D_h$ by:
\begin{equation}\label{Dhodd}
D_h = \left\{\begin{array}{lcl}
\displaystyle {1\over 2}\Gamma[D,\Gamma]_- & \quad & \mathrm{for}\; n\;\mathrm{odd} \\ \\
\displaystyle {1\over 2}\Gamma[D,\Gamma]_+ & \quad & \mathrm{for}\; n\;\mathrm{even}
\end{array}\right.
\end{equation}
where $[a,b]_\pm = ab \pm ba$.
\end{defin}
Now we want to impose a condition corresponding to the constant length fibres condition introduced in the $n = 1$ case \cite{DS10}. We give the following definition:
\begin{defin}\label{fiberscond}
We say the bundle $\A$ to have \emph{isometric} fibres if there exists an operator $D_v : \H\rightarrow \H$ such that $D = D_v + D_h + Z$ and:
\begin{description}
\item[(a)] $D_v|_{\H_0} = 0$, where $\H_0$ is the common 0-eigenspace of the derivations $\delta_i$; 
\item[(b)] $[D_v,\Gamma] = 0$ if $n$ is odd, $[D_v,\Gamma]_+ = 0$ if $n$ is even;
\item[(c)] $[D_v,\delta_i] = 0$ for any $i=1,\ldots,n$;
\item[(d)] $Z$ is a bounded operator;
\item[(e)] $Z$ commutes with the elements from $\A$;
\item[(f)] there exists a bounded selfadjoint operator $Z'$ such that $Ja^*J^{-1}(Z\psi) = Z'(Ja^*J^{-1}\psi)$ for any $a\in\A$ and any $\psi\in\H$.
\end{description}
\end{defin} 
Notice that condition (e) has the following consequence: the horizontal Dirac operator $D_h$ and the Dirac operator $D$ determine the same first order differential calculus on $\B$; that is, $[D_h,b] = [D,b]$ for any $b\in\B$. Now we can prove the following results.
\begin{prop}\label{propTnodd}
Let $(\A,\H,D,J,\{\delta_j\},\Gamma)$ be an odd dimensional projectable spectral triple with isometric fibres and let $\H_0$ be the common 0-eigenspace of the derivations $\delta_j$. Then, if we denote by $D_0$ the restriction of $D_h$ to $\H_0$, $(\B,\H_0,D_0)$ is a (usually reducible) spectral triple.

Moreover, if we denote by $J_0$ the restriction of $J$ to $\H_0$, then $J_0$ determines a right action of $\B$ (or a left action of the opposite algebra $\B^\circ$) on $\H_0$ by 
$$hb = b^\circ h = J_0b^*J_0^{-1}h$$
for any $b\in\B$, $h\in\H$. This action fulfils the following properties:
\begin{description}
\item[(a)] $[b,J_0c^*J_0^{-1}] = 0$ for all $b,c\in\B$; that is, $J_0$ maps $\B$ into its commutant;
\item[(b)] $[[D_0,b],J_0c^*J_0^{-1}] = 0$ for all $b,c\in\B$ (\emph{first order condition}).
\end{description}
\end{prop}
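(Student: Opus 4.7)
The plan is to verify, in order, (i) that $\H_0$ is stable under all the operators appearing in the statement so that the stated restrictions make sense, (ii) the spectral triple axioms for $(\B,\H_0,D_0)$, and (iii) the two properties of $J_0$.

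For stability of $\H_0$: from the inclusion $\pi(\A^{(k)})\H_l\subseteq\H_{k+l}$ already noted in the excerpt one has $\pi(\B)\H_0\subseteq\H_0$. The anticommutation $\delta_jJ+J\delta_j=0$ gives $\delta_j(Jh)=-J\delta_j h=0$ for $h\in\H_0$, so $J\H_0\subseteq\H_0$ and $J_0:=J|_{\H_0}$ is a well-defined antiunitary operator. Since $[\Gamma,\delta_j]=0$ and $[D,\delta_j]=0$, both $\Gamma$ and $D$ preserve $\H_0$, hence so does $D_h=\tfrac12\Gamma[D,\Gamma]_\pm$; thus $D_0:=D_h|_{\H_0}$ is well-defined and self-adjoint on $\H_0$.

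For the spectral triple axioms: boundedness of $[D_0,b]$ for $b\in\B$ is the restriction to $\H_0$ of the equality $[D_h,b]=[D,b]$ recorded in the remark following Definition \ref{fiberscond}. For the compact-resolvent condition, condition (a) gives $D_v|_{\H_0}=0$, so
$$D|_{\H_0}=D_0+Z|_{\H_0}.$$
Since $\H_0$ reduces $D$, $D|_{\H_0}$ inherits compact resolvent from $D$, and by condition (d) this property survives the bounded perturbation $-Z|_{\H_0}$, giving compact resolvent for $D_0$. Together with the bounded-commutator statement, this exhibits $(\B,\H_0,D_0)$ as a spectral triple.

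For the statements about $J_0$: the right action $h\cdot b=J_0b^*J_0^{-1}h$ is well-defined on $\H_0$ by the stability above. Property (a), $[b,J_0c^*J_0^{-1}]=0$ for $b,c\in\B$, is the restriction to $\H_0$ of the identity $[b,Jc^*J^{-1}]=0$ valid on all of $\H$ by the real-structure axiom for the original triple. Property (b) follows in the same way from the original first order condition $[[D,b],Jc^*J^{-1}]=0$: using $[D,b]|_{\H_0}=[D_h,b]|_{\H_0}=[D_0,b]$ and $J|_{\H_0}=J_0$, restriction yields $[[D_0,b],J_0c^*J_0^{-1}]=0$. The only step requiring genuine attention is the compact-resolvent axiom, where one must use condition (a) to kill the vertical contribution on $\H_0$ and condition (d) to absorb $Z|_{\H_0}$ as a bounded perturbation; the rest is essentially bookkeeping from the invariance of $\H_0$.
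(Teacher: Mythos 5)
Your proposal is correct, and it reaches the conclusion by a route that differs from the paper's in two respects. For the first-order condition (b), the paper computes directly from the definition of the horizontal operator: writing $D_0=\tfrac12(\Gamma D\Gamma\pm D)|_{\H_0}$, it uses $[\Gamma,\pi(a)]=0$ together with the (anti)commutation of $J$ with $\Gamma$ to see that $\Gamma$ commutes with $Jc^*J^{-1}$, whence $[[D_0,b],J_0c^*J_0^{-1}]=\tfrac12\Gamma[[D,b],Jc^*J^{-1}]\Gamma=0$; this argument needs only projectability (Definition \ref{defTnprojodd}), not the isometric-fibres hypothesis. You instead restrict the ambient identity $[[D,a],Jb^*J^{-1}]=0$ to $\H_0$ after replacing $[D,b]$ by $[D_0,b]$ via $[D_h,b]=[D,b]$ on $\B$; this is valid (on $\H_0$ it follows already from conditions (a) and (e) of Definition \ref{fiberscond}, since for $h\in\H_0$ one has $Dh=D_0h+Zh$ and $[Z,b]=0$), it is arguably cleaner because it bypasses the $J\Gamma$ sign bookkeeping, but it does lean on the isometric-fibres conditions where the paper's argument does not. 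For the spectral-triple part, the paper simply asserts selfadjointness and compact resolvent of $D_0$ with a citation to \cite{DS10} and notes that $\B$ preserves $\H_0$; your argument via $D_0=D|_{\H_0}-Z|_{\H_0}$, with $\H_0$ reducing $D$ and $Z|_{\H_0}$ a bounded perturbation, supplies exactly the content hidden in that citation, and is welcome detail. Two small points you could tighten: selfadjointness (not mere symmetry) of $D_0$ is most cleanly obtained from the same decomposition $D_0=D|_{\H_0}-Z|_{\H_0}$ rather than from the commutation relations alone, and for the Kato-type stability you want $Z|_{\H_0}$ symmetric, which follows from condition (f) by taking $a=1$ (giving $Z=Z'$ selfadjoint); neither affects the validity of your argument, and the paper itself is no more explicit on these points.
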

\begin{proof}
Clearly $D_h$ is a selfadjoint operator, and it has compact resolvent (see \cite{DS10}). Also, $\B$ preserves $\H_0$ since it is exactly the invariant subalgebra for the $\T^n$-action. Thus $(\B,\H_0,D_0)$ is a spectral triple. We have to prove (a) and (b). (a) follows simply by the fact that $J_0$ is nothing else than $J$, and $J$ maps $\A$, and hence $\B$, into its commutant. For what concerns (b), we recall that the triple over $\A$ satisfies the first order condition; that is,
$$[[D,a],Jb^*J^{-1}] = 0\quad\forall a,b\in\A.$$
Using this fact we can see that:
\begin{equation*}
\begin{split}
[[D_0,b],J_0c^*J_0^{-1}] & = {1\over 2}[[\Gamma D\Gamma,b] \pm [D,b],Jc^*J^{-1}] \\
& = {1\over 2}[\Gamma [D,b]\Gamma,Jc^*J^{-1}] = {1\over 2}\Gamma[[D,b],Jc^*J^{-1}]\Gamma = 0,
\end{split}
\end{equation*}
where we used also the fact that $J\Gamma = -\Gamma J$, according to definition \ref{defTnprojodd}. So the first order condition (b) is fulfilled.
\end{proof}
\begin{lemma}\label{lemmaGammaodd}
Let $(\A,\H,D,J,\{\delta_j\},\Gamma)$, $(\B,D_0,\H_0)$ as above. Then, if we denote by $\gamma_0$ the restriction of $\Gamma$ to $\H_0$,
\begin{equation*}
\begin{array}{lcl}
D_0\gamma_0 = -\gamma_0D_0 & \quad\quad & \mathrm{for}\; n \; \mathrm{odd},\\ \\
D_0\gamma_0 = \gamma_0D_0 & \quad\quad & \mathrm{for}\; n \; \mathrm{even}.
\end{array}
\end{equation*}
\end{lemma}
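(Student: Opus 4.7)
The plan is to prove the stronger statement that $\Gamma$ anticommutes (resp.\ commutes) with $D_h$ on all of $\H$ for $n$ odd (resp.\ even), and then obtain the lemma by restricting to $\H_0$. First I would check that restricting to $\H_0$ is legitimate: since $[\Gamma,\delta_j]=0$ for all $j$, the grading $\Gamma$ preserves each joint eigenspace of the $\delta_j$'s, in particular $\H_0$; and since $[D,\delta_j]=0$, the same holds for $D$ and therefore for $\Gamma D\Gamma$, hence for $D_h$. Thus $\gamma_0 = \Gamma\restrict_{\H_0}$ and $D_0=D_h\restrict_{\H_0}$ are well defined, and any identity of the form $\Gamma D_h = \pm D_h\Gamma$ on $\H$ immediately descends to $\gamma_0 D_0 = \pm D_0\gamma_0$ on $\H_0$.

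The computation itself is a one-line calculation using $\Gamma^2=1$. For $n$ odd,
$$D_h = \tfrac12\Gamma[D,\Gamma]_- = \tfrac12(\Gamma D\Gamma - D),$$
so
$$\Gamma D_h = \tfrac12(D\Gamma - \Gamma D), \qquad D_h\Gamma = \tfrac12(\Gamma D - D\Gamma),$$
and summing gives $\Gamma D_h + D_h\Gamma = 0$. Restricting to $\H_0$ yields $D_0\gamma_0 = -\gamma_0 D_0$.

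For $n$ even,
$$D_h = \tfrac12\Gamma[D,\Gamma]_+ = \tfrac12(\Gamma D\Gamma + D),$$
so
$$\Gamma D_h = \tfrac12(D\Gamma + \Gamma D) = D_h\Gamma,$$
giving $[D_h,\Gamma]=0$, and after restriction $D_0\gamma_0 = \gamma_0 D_0$.

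There is essentially no obstacle here: the proof is pure algebraic manipulation based on $\Gamma^2=1$ and the definition \eqref{Dhodd} of $D_h$. The only conceptual point worth stating explicitly is the invariance of $\H_0$ under both $\Gamma$ and $D_h$, which justifies passing to the restrictions. Note also that the sign conventions in Definition \ref{defTnprojodd} relating $J$ and $\Gamma$ play no role in this particular lemma; they were used in the preceding proposition to verify the first-order condition, whereas here only the interaction between $D$ and $\Gamma$ matters.
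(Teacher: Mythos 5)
Your proof is correct and matches the paper's argument: the paper disposes of this lemma with the single remark that it ``follows by direct computation, using the definition of $D_h$,'' which is exactly the one-line manipulation with $\Gamma^2=1$ that you carry out, together with the (correct) observation that $\Gamma$ and $D_h$ preserve $\H_0$ so the identity restricts to $\gamma_0$ and $D_0$.
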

\begin{proof}
It follows by direct computation, using the definition of $D_h$.
\end{proof}

\subsection{Projectable spectral triples: even case}
\begin{defin}\label{defTnprojeven}
An even dimensional $\T^n$-equivariant real spectral triple $(\A,\H,D,J,\gamma,\{\delta_j\})$, of $KR$-dimension $n+m$, is said to be \emph{projectable} along the fibres if there exists a $\Z_2$ grading $\Gamma$ on $\H$ which satisfies the following conditions,
$$\Gamma^2 = 1,\quad\quad \Gamma^* = \Gamma,$$
$$[\Gamma,\pi(a)]=0 \quad\forall a\in\A,$$
$$[\Gamma,\delta_j] = 0 \quad\;\mathrm{for}\; j=1,\ldots,n,$$
$$J\Gamma = -\Gamma J,$$
$$\Gamma \gamma = (-1)^n\gamma\Gamma.$$
 We define the \emph{horizontal} Dirac operator $D_h$ by:
\begin{equation}\label{Dheven}
D_h = \left\{\begin{array}{lcl}
\displaystyle {1\over 2}\Gamma[D,\Gamma]_- & \quad & \mathrm{for}\; n\;\mathrm{odd} \\ \\
\displaystyle {1\over 2}\Gamma[D,\Gamma]_+ & \quad & \mathrm{for}\; n\;\mathrm{even}
\end{array}\right.
\end{equation}
where $[a,b]_\pm = ab \pm ba$.
\end{defin}

Also in this case we can introduce the isometric fibres condition, see definition \ref{fiberscond}, and prove the analogue of proposition \ref{propTnodd}:
\begin{prop}\label{propTneven}
Let $(\A,\H,D,J,\gamma,\{\delta_j\},\Gamma)$ be an even dimensional projectable spectral triple with isometric fibres and let $\H_0$ be the common 0-eigenspace of the derivations $\delta_j$. Then, if we denote by $D_0$ the restriction of $D_h$ to $\H_0$, $(\B,\H_0,D_0)$ is a (usually reducible) spectral triple.

If we denote by $J_0$ the restriction of $J$ to $\H_0$, then $J_0$ determines a right action of $\B$ (or a left action of the opposite algebra $\B^\circ$) on $\H_0$ by 
$$hb = b^\circ h = J_0b^*J_0^{-1}h$$
for any $b\in\B$, $h\in\H$. And such an action fulfils the following properties:
\begin{description}
\item[(a)] $[b,J_0c^*J_0^{-1}] = 0$ for all $b,c\in\B$; that is, $J_0$ maps $\B$ into its commutant;
\item[(b)] $[[D_0,b],J_0c^*J_0^{-1}] = 0$ for all $b,c\in\B$ (\emph{first order condition}).
\end{description}
Moreover both the operators $\Gamma$ and $\gamma\Gamma$ restricts to $\H_0$, and $\gamma$ anticommutes with $D_0$.
\end{prop}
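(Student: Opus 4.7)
The plan is to imitate Proposition \ref{propTnodd} for every assertion that does not involve $\gamma$, and then to handle the two new statements about $\gamma$ with short equivariance and sign-tracking arguments.

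For the spectral triple claim and items (a) and (b), I would essentially copy the proof of Proposition \ref{propTnodd}: $D_h$ is self-adjoint with compact resolvent via the isometric-fibres decomposition $D = D_v + D_h + Z$ (as in \cite{DS10}), the algebra $\B = \A^{(0)}$ preserves $\H_0$ because $\H_0$ is the common $0$-eigenspace of the $\delta_j$, and $J_0 = J|_{\H_0}$ makes sense because $\delta_j J + J\delta_j = 0$ maps $\H_0$ to itself. Property (a) is then inherited since $J$ already sends $\A \supseteq \B$ into its commutant. Property (b) follows from the first order condition on $D$ together with $[\Gamma,b] = 0$ and $J\Gamma = -\Gamma J$; since the sign relation $J\Gamma = -\Gamma J$ appears verbatim in Definition \ref{defTnprojeven}, the computation from Proposition \ref{propTnodd} goes through without any modification.

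For the two new assertions I would first observe that $\Gamma$ commutes with all $\delta_j$ by Definition \ref{defTnprojeven}, hence preserves every joint eigenspace of the $\delta_j$ and in particular $\H_0$. The equivariance axiom $[\delta_j,\gamma]=0$ gives the same for $\gamma$, and therefore for $\gamma\Gamma$.

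The only genuine computation is the anticommutation $\gamma D_0 = -D_0\gamma$. I would prove the identity $\gamma D_h = -D_h \gamma$ on all of $\H$ and then restrict. Writing
$$D_h = \tfrac{1}{2}\left(\Gamma D\Gamma \pm D\right),$$
with the upper sign for $n$ even and the lower sign for $n$ odd, I would commute $\gamma$ past the two factors of $\Gamma$ using $\Gamma\gamma = (-1)^n\gamma\Gamma$ and past $D$ using the standard $\gamma D = -D\gamma$ for the original even triple. The main obstacle is the three-way bookkeeping of signs --- one from each $\Gamma$ and one from $D$ --- but the parity-dependent choice of commutator versus anticommutator in the definition of $D_h$ has been set up precisely so that these contributions combine to a uniform $-1$, giving $\gamma D_h = -D_h\gamma$ in both parities. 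Restricting to $\H_0$ then yields $\gamma D_0 = -D_0\gamma$ and completes the proof.
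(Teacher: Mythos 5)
Your proposal is correct and takes essentially the same route as the paper, whose entire proof of this proposition is the statement that it is the same as that of Proposition \ref{propTnodd}; your added checks that $\Gamma$, $\gamma$ (hence $\gamma\Gamma$) preserve $\H_0$ because they commute with the $\delta_j$, and that $\gamma D_h=-D_h\gamma$, merely make explicit what the paper leaves implicit. One small remark on your sign bookkeeping: the anticommutation does not actually hinge on the parity-dependent choice of commutator versus anticommutator in $D_h=\tfrac12\left(\Gamma D\Gamma\pm D\right)$, since the relation $\Gamma\gamma=(-1)^n\gamma\Gamma$ enters twice and so both $\Gamma D\Gamma$ and $D$ separately anticommute with $\gamma$, making any linear combination of them anticommute with $\gamma$ regardless of the sign.
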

\begin{proof}
 The proof is the same as that of proposition \ref{propTnodd}.
\end{proof}

\subsection{Real structure and real spectral triples}\label{3.3}
The construction of a real structure for the triples considered in the previous sections requires to discuss separately 4 cases. Indeed, if we denote by $m$ the $KR$-dimension of the triple over $\A$, and we set $j = m - n$ (so that $j$ should be the dimension of the triple over $\B$) we have four different situations: $j$ even and $n$ even, $j$ even and $n$ odd, $j$ odd and $n$ even, 
$j$ odd and $n$ odd.

Before beginning the discussion, we recall here the dependence on the $KR$-dimension of the commutation relations between the real structure, the Dirac operator and the $\Z_2$-grading. We use Connes' selection\footnote{There is another possible choice, see \cite{DD10}.} (see \cite{Bondia,DD10}). Given a real spectral triple $(\A,\H,J,D,\gamma)$ we say that it is of $KR$-dimension $j$ (which we consider always modulo 8) if:
$$J^2 = \eps\cdot\id,$$
$$JD = \eps' DJ,$$
and, for $j$ even,
$$J\gamma = \eps''\gamma J,$$
$$\gamma D = - D\gamma,$$
where $\eps,\eps',\eps'' = \pm 1$ according to table \eqref{tableKRdim}.

\bt{$j$ even, $n$ even.} $(\A,\H,D,J,\gamma)$ is an even real spectral triple of KR-dimension $m = j + n$. We extend the triple $(\B,\H_0,D_0)$ to an even dimensional real spectral triple $(\B,\H_0,D_0,j_0,\gamma_0)$ of KR-dimension $j$, where $j_0$ and $\gamma_0$ are defined in the tables below (the restriction of the operators to $\H_0$ is always understood). We recall that $D_0$ is the restriction of $D_h$ to $\H_0$, where $D_h = {1\over 2}\Gamma[D,\Gamma]_+$, so that $\Gamma D_0 = D_0\Gamma$. Also, we recall that, since $n$ is even, $\Gamma\gamma = \gamma\Gamma$.

\begin{table}[!ht]\caption{$j_0$ and $\gamma_0$ for the even-even case}\label{evenevenJgamma}\begin{center}
\begin{tabular}{|c||c|c|c|c|}
 \hline
 \backslashbox{j}{n} & $0$ & $2$ & $4$ & $6$ \\
 \hline
 \hline
 $0$ & $J$ & $\Gamma J$ & $\Gamma J$ & $J$ \\
 \hline
 $2$ & $J$ & $J$ & $\Gamma J$ & $\Gamma J$ \\
 \hline
 $4$ & $J$ & $\Gamma J$ & $\Gamma J$ & $J$ \\
 \hline
 $6$ & $J$ & $J$ & $\Gamma J$ & $\Gamma J$ \\
 \hline
\end{tabular}$\quad\quad\quad$
\begin{tabular}{|c||c|c|c|c|}
 \hline
 \backslashbox{j}{n} & $0$ & $2$ & $4$ & $6$ \\
 \hline
 \hline
 $0$ & $\gamma$ & $\gamma\Gamma$ & $\gamma$ & $\gamma\Gamma$ \\
 \hline
 $2$ & $\gamma$ & $\gamma\Gamma$ & $\gamma$ & $\gamma\Gamma$ \\
 \hline
 $4$ & $\gamma$ & $\gamma\Gamma$ & $\gamma$ & $\gamma\Gamma$ \\
 \hline
 $6$ & $\gamma$ & $\gamma\Gamma$ & $\gamma$ & $\gamma\Gamma$ \\
 \hline
\end{tabular}
\end{center}
\end{table}

\bt{$j$ even, $n$ odd.} $(\A,\H,D,J)$ is an odd real spectral triple of KR-dimension $m = j + n$. 
We turn the triple $(\B,\H_0,D_0)$ into an even dimensional real spectral triple 
$(\B,\H_0,D'_0,j_0,\gamma_0)$ of KR-dimension $j$, where $\gamma_0 = \Gamma|_{\H_0}$ and
 $j_0$, $D'_0$ are defined in the tables below\footnote{In the cases with $(j,n)$ equal to $(0,3)$,
 $(0,5)$, $(4,3)$ and $(4,5)$, actually, the real structure $j_0$ does not fulfil the right commutation
 relations. Indeed, $j_0^2$ has the wrong sign. For a discussion of this issue see remark
 \ref{rmkoddeven}.} (the restriction of the operators to $\H_0$ is always understood). 
We recall that $D_0$ is the restriction of $D_h$ to $\H_0$, where 
$D_h = {1\over 2}\Gamma[D,\Gamma]$, so that $\Gamma D_0 = - D_0\Gamma$.

\begin{table}[!ht]\caption{$D'_0$ and $j_0$ for the even-odd case}\label{evenoddDJ}\begin{center}
\begin{tabular}{|c||c|c|c|c|}
 \hline
 \backslashbox{j}{n} & $1$ & $3$ & $5$ & $7$ \\
 \hline
 \hline
 $0$ & $D_0$ & $D_0$ & $D_0$ & $D_0$ \\
 \hline
 $2$ & $D_0$ & $\Gamma D_0$ & $\Gamma D_0$ & $D_0$ \\
 \hline
 $4$ & $D_0$ & $D_0$ & $D_0$ & $D_0$ \\
 \hline
 $6$ & $D_0$ & $\Gamma D_0$ & $\Gamma D_0$ & $D_0$ \\
 \hline
\end{tabular}$\quad\quad\quad$
\begin{tabular}{|c||c|c|c|c|}
 \hline
 \backslashbox{j}{n} & $1$ & $3$ & $5$ & $7$ \\
 \hline
 \hline
 $0$ & $\Gamma J$ & $J\footnotemark[\value{footnote}]$ & $\Gamma J\footnotemark[\value{footnote}]$ & $J$ \\
 \hline
 $2$ & $J$ & $J$ & $\Gamma J$ & $\Gamma J$ \\
 \hline
 $4$ & $\Gamma J$ & $J\footnotemark[\value{footnote}]$ & $\Gamma J\footnotemark[\value{footnote}]$ & $J$ \\
 \hline
 $6$ & $J$ & $J$ & $\Gamma J$ & $\Gamma J$ \\
 \hline
\end{tabular}
\end{center}
\end{table}

\begin{rmk}\label{rmkoddeven}
 We have to spend some words about the cases with $(j,n)$ equal to $(0,3)$, $(0,5)$, $(4,3)$ 
and $(4,5)$. In all these situations, indeed, it is not possible to find a set of operators 
$(D_0,j_0,\gamma_0)$ constructed only using $\Gamma$, $D$ and $J$ and fulfilling all the required
commutation relations. And this issue can not be solved changing the commutation relation between
$J$ and $\Gamma$. Indeed, there are two possible choices: $J\Gamma = \Gamma J$ 
and $J\Gamma = -\Gamma J$. In the first case $\gamma_0 = \Gamma$ fulfils all the required 
commutation relations, but $j_0^2$ has the wrong sign (that is, $j_0^2 = -1$ for $j\equiv 0\; 
(\mathrm{mod}\; 8)$ and $j_0^2 = 1$ for $j\equiv 4\; (\mathrm{mod}\; 8)$). In the second one, 
instead, it is possible to recover a $j_0$ with the correct commutation relations, by setting 
$j_0 = \Gamma J$, but then we can not find a suitable $\gamma_0$ commuting with $j_0$. 
We have chosen to adopt the first convention, since it allows to define all the three operators, 
even if with $j_0^2$ with the wrong sign, and, moreover, it appears as the more natural choice 
\cite{ZPhD}. We conclude this remark with the following observation: the fact that we are not able 
to define a $j_0$ fulfilling all the right commutation relations does not mean that such a $j_0$ 
does not exist, but only that it can not be expressed only in terms of $J$, $D$ and $\Gamma$.
$\diamond$
\end{rmk}

\bt{$j$ odd, $n$ even.} $(\A,\H,D,J)$ is an odd real spectral triple of KR-dimension $m = j + n$. 
We turn the triple $(\B,\H_0,D_0)$ into an odd dimensional real spectral triple $(\B,\H_0,D'_0,j_0)$ 
of KR-dimension $j$, where $j_0$ and $D'_0$ are defined in the tables below (the restriction 
of the operators to $\H_0$ is always understood). We recall that $D_0$ is the restriction of $D_h$ 
to $\H_0$, where $\displaystyle D_h = {1\over 2}\Gamma[D,\Gamma]_+$, so that 
$\Gamma D_0 = D_0\Gamma$.

\begin{table}[!ht]\caption{$D'_0$ and $j_0$ for the odd-even case}\label{oddevenDJ}\begin{center}
\begin{tabular}{|c||c|c|c|c|}
 \hline
 \backslashbox{j}{n} & $0$ & $2$ & $4$ & $6$ \\
 \hline
 \hline
 $1$ & $D_0$ & $\Gamma D_0$ & $D_0$ & $\Gamma D_0$ \\
 \hline
 $3$ & $D_0$ & $\Gamma D_0$ & $D_0$ & $\Gamma D_0$ \\
 \hline
 $5$ & $D_0$ & $\Gamma D_0$ & $D_0$ & $\Gamma D_0$ \\
 \hline
 $7$ & $D_0$ & $\Gamma D_0$ & $D_0$ & $\Gamma D_0$ \\
 \hline
\end{tabular}$\quad\quad\quad$
\begin{tabular}{|c||c|c|c|c|}
 \hline
 \backslashbox{j}{n} & $0$ & $2$ & $4$ & $6$ \\
 \hline
 \hline
 $1$ & $J$ & $\Gamma J$ & $\Gamma J$ & $J$ \\
 \hline
 $3$ & $J$ & $J$ & $\Gamma J$ & $\Gamma J$ \\
 \hline
 $5$ & $J$ & $\Gamma J$ & $\Gamma J$ & $J$ \\
 \hline
 $7$ & $J$ & $J$ & $\Gamma J$ & $\Gamma J$ \\
 \hline
\end{tabular}
\end{center}
\end{table}

\bt{$j$ odd, $n$ odd.} $(\A,\H,D,J,\gamma)$ is an even real spectral triple of KR-dimension $m = j + n$. We turn the triple $(\B,\H_0,D_0)$ into an odd dimensional real spectral triple $(\B,\H_0,D'_0,j_0)$ of KR-dimension $j$, where $j_0$ and $D'_0$ are defined in the tables below (the restriction of the operators to $\H_0$ is always understood). We recall that $D_0$ is the restriction of $D_h$ to $\H_0$, where $\displaystyle D_h = {1\over 2}\Gamma[D,\Gamma]$, so that $\Gamma D_0 = - D_0\Gamma$.

\begin{table}[!ht]\caption{$D'_0$ and $j_0$ for the odd-odd case}\label{oddoddDJ}\begin{center}
\begin{tabular}{|c||c|c|c|c|}
 \hline
 \backslashbox{j}{n} & $1$ & $3$ & $5$ & $7$ \\
 \hline
 \hline
 $1$ & $D_0$ & $D_0$ & $\Gamma D_0$ & $\Gamma D_0$ \\
 \hline
 $3$ & $D_0$ & $D_0$ & $D_0$ & $D_0$ \\
 \hline
 $5$ & $D_0$ & $D_0$ & $\Gamma D_0$ & $\Gamma D_0$ \\
 \hline
 $7$ & $D_0$ & $D_0$ & $D_0$ & $D_0$ \\
 \hline
\end{tabular}$\quad\quad\quad$
\begin{tabular}{|c||c|c|c|c|}
 \hline
 \backslashbox{j}{n} & $1$ & $3$ & $5$ & $7$ \\
 \hline
 \hline
 $1$ & $\Gamma J$ & $\Gamma J$ & $J$ & $J$ \\
 \hline
 $3$ & $J$ & $\Gamma J$ & $\Gamma J$ & $J$ \\
 \hline
 $5$ & $\Gamma J$ & $\Gamma J$ & $J$ & $J$ \\
 \hline
 $7$ & $J$ & $\Gamma J$ & $\Gamma J$ & $J$ \\
 \hline
\end{tabular}
\end{center}
\end{table}

We conclude this section pointing out that, in all the cases discussed above, the real structure $j_0$  maps the algebra $\B$ into its commutant and the triple fulfils the first order condition. Both properties follow from proposition \ref{propTnodd} (and from the analogue result in the even dimensional case, see proposition \ref{propTneven}).

\subsection{$D$-connections and twisted Dirac operators}\label{3.4}

We recall some of the results in \cite{DS10} about a way to twist real spectral triples by a left-module equipped with a hermitian connection.

Let $(\B,\H,D,J)$ be a real spectral triple over a (unital) algebra $\B$. Consider another Hilbert space $\H_M$ together with a representation of $\B$. Let $M$ be the space of $\B$-linear bounded maps $m : \H \rightarrow \H_M$. Assume that:
\begin{description}
\item[(a)] $\H M \equiv M(\H)$ is dense in $\H_M$, where $M(\H)$ is the linear span of elements $m(h)$, $m\in M$, $h\in\H$;
\item[(b)] the multiplication map from $\H\otimes_\B M$ to $\H M$ is an isomorphism.
\end{description}
Then using the right $\B$-module structure induced on $\H$ by the real structure $J$, namely 
\begin{equation}\label{rightaction}
hb \equiv Jb^*J^{-1}h
\end{equation} 
for any $h\in\H$ and any $b\in\B$, one has a left $\B$-module structure on $M$ through:
$$(bm)(h) = m(hb)\quad\;\;\forall m\in M.$$
Writing the action of $M$ on the right, that is $m(h) \equiv hm$, the $\B$-linearity reads
$$(bh)m = b(hm),$$
while the left $\B$ action on $M$ becomes
$$h(bm) = (hb)m.$$
Also, it follows from the order one condition (see 
e.g. \cite{Bondia}) that there is a right action of $\Omega^1_D(\B)$ on $H$, given by:
\begin{equation}\label{homega}
h\omega = - J\omega^*J^{-1}h\quad\;\;\forall \omega\in\Omega^1_D(\B), 
\end{equation}
where $\omega^*$ is the adjoint of $\omega$, s.t. $([D,b])^* = -[D,b^*]$ and
$$h[D,b] = D(hb) - (Dh)b.$$
Such an action is clearly left $\B$-linear. Also, it induces a left action of $\Omega^1_D(\B)$ on $M$ and $\Omega^1_D(\B)M$ is just the space of all compositions $m\circ \omega$ of left $\B$-linear maps. 

Next we adopt from \cite{DS10} suitable connections (covariant derivatives). 
\begin{defin}\label{defDconn}
We call a linear map $\nabla : M\rightarrow \Omega^1_D(\B)M$ a \emph{$D$-connection} on $M$ 
if it satisfies:
$$\nabla(bm) = [D,b]m + b\nabla(m), \;\;\forall b\in\B,\; m\in M.$$
\end{defin}
Since we are dealing with maps between Hilbert spaces, we have 
the adjoint $m^\dag$ of $m\in M$, 
and the adjoint $\eta^\dag$ of a 1-form $\eta\in \Omega^1_D(\B)$
(which coincides with $\eta^*$, where the star operation is extended to forms). 
Thus we can define the adjoint of an element of $\Omega^1_D(\B)M$ simply by $(\eta m)^\dag = m^\dag\eta^\dag$. Of course, it will not be an element of $\Omega^1_D(\B)M$, but we do not need this. Now we can introduce the notion of hermiticity for a $D$-connection.
\begin{defin}\label{hermitDconn}
A $D$-connection $\nabla$ is said to be \emph{hermitian} if, for each $m_1,m_2\in M$,
\begin{description}
\item[(i)] as an operator on $\H$, $m_1^\dag\circ m_2\in J\B J^{-1}$;
\item[(ii)] writing the actions on arbitrary $h\in\H$ on the right, we have:
$$h\nabla(m_2)m_1^\dag - hm_2\nabla(m_1)^\dag = (Dh)m_2m_1^\dag - D(hm_2m_1^\dag).$$
\end{description}
\end{defin}
Given a $D$-connection in \cite{DS10} certain operator $D_M$ is defined over the dense set 
in $\H_M$.
\begin{defin}
Define $D_M$ on $M(\mathrm{Dom}(D))\subset \H_M$ by:
$$D_M(hm) = (Dh)m + h\nabla(m)\quad\;\forall m\in M.$$
\end{defin}
\begin{prop}
If $\nabla$ is a hermitian $D$-connection, the operator $D_M$ is selfadjoint and has bounded commutators with $\B$.
\end{prop}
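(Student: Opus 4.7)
The result generalises \cite{DS10} and I would proceed in four steps.

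\emph{Step 1 (well-definedness on $\H M\simeq \H\otimes_\B M$).} By assumption (b) I only need to check that $D_M$ respects the balanced relation $(hb)\otimes m=h\otimes(bm)$. Using the Leibniz rule $\nabla(bm)=[D,b]m+b\nabla(m)$ and the identity $h[D,b]=D(hb)-(Dh)b$ recorded just before Definition \ref{defDconn}, both $D_M((hb)m)$ and $D_M(h(bm))$ reduce to $D(hb)m+(hb)\nabla(m)$, so $D_M$ is unambiguously defined on $M(\mathrm{Dom}(D))$.

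\emph{Step 2 (symmetry).} Using $\langle m_1(x),m_2(y)\rangle_{\H_M}=\langle x,(m_1^\dag m_2)y\rangle_\H$, I would expand
\[
\langle D_M(h_1m_1),h_2m_2\rangle-\langle h_1m_1,D_M(h_2m_2)\rangle
\]
into four pieces. Writing $A:=m_1^\dag m_2$, condition (i) identifies $A$ with $Jb^*J^{-1}$ for some $b\in\B$; the identity $D(hb)=(Dh)b+h[D,b]$ together with selfadjointness of $D$ collapses the two ``$D$-on-$h$'' pieces to $\langle h_1,[D,A]h_2\rangle$. The two ``$\nabla$'' pieces are then controlled by condition (ii), which I read as the operator identity $m_1^\dag\nabla(m_2)(h)-\nabla(m_1)^\dag m_2(h)=A(Dh)-D(Ah)$ on $\H$; it yields the opposite commutator and the two cancel.

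\emph{Steps 3--4.} Selfadjointness is then obtained by adapting the closure argument of \cite{DS10}: $D_M$ is symmetric by Step 2 on a $D$-invariant core, and hermiticity of $\nabla$ makes the difference between $D_M$ and $D\otimes\id_M$ (under the isomorphism with $\H\otimes_\B M$) relatively bounded, so essential selfadjointness follows by a direct core argument or by Kato--Rellich. For bounded commutators, left-$\B$-linearity of $m$ gives $b\cdot(hm)=(bh)m$, whence
\[
[D_M,b](hm)=(D(bh))m+(bh)\nabla(m)-b(Dh)m-b(h\nabla(m))=[D,b](h)\,m,
\]
so $[D_M,b]$ is induced by $[D,b]\otimes\id_M$ and has norm at most $\|[D,b]\|$.

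The main obstacle is Step 2: the seemingly ad-hoc hermiticity condition (ii) must be recognised as a $J$-twisted integration-by-parts identity designed precisely to cancel the commutator $[D,A]$ produced by the right $\B$-module structure $hb=Jb^*J^{-1}h$ on $\H$. Once this is seen, well-definedness, the extension to a selfadjoint operator, and the bounded-commutator property are all routine consequences of the general properties of $D$ and of the $\B$-bimodule compatibility built into $M$.
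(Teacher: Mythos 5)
Your Steps 1, 2 and 4 follow what is essentially the intended route (the paper itself gives no internal argument here -- it simply defers to \cite{DS10}, Prop.~4.7): well-definedness from the Leibniz rule together with $h[D,b]=D(hb)-(Dh)b$, symmetry from hermiticity, and the commutator identity $[D_M,b](hm)=([D,b]h)m$. Two caveats on those steps. First, in Step 2 you should actually chase the signs rather than assert the cancellation: with the conventions as literally stated in this paper (the right action $h\omega=-J\omega^*J^{-1}h$ of \eqref{homega} and condition (ii) of definition \ref{hermitDconn}), writing $A=m_1^\dag\circ m_2$, the ``$D$-on-$h$'' pieces of $\langle D_M(h_1m_1),h_2m_2\rangle-\langle h_1m_1,D_M(h_2m_2)\rangle$ give $\langle h_1,[D,A]h_2\rangle$, while condition (ii) as printed makes the $\nabla$-pieces contribute $+\langle h_1,[D,A]h_2\rangle$ as well, so the two terms \emph{add} rather than cancel; the cancellation you want requires the opposite sign in (ii), i.e.\ a careful reconciliation of the conventions inherited from \cite{DS10}. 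This is a convention issue rather than a flaw in your strategy, but as written your Step 2 does not close. Second, the bound $\Vert[D_M,b]\Vert\le\Vert[D,b]\Vert$ is not immediate from the pointwise identity: ``$[D,b]\otimes\id_M$'' on the balanced product $\H\otimes_\B M$ is not a priori a bounded operator dominated by $\Vert[D,b]\Vert$, so boundedness needs a separate argument (in the concrete application it drops out of the explicit formula \eqref{Domega}).

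The genuine gap is Step 3. Your proposed comparison operator $D\otimes\id_M$ is not well defined on $\H\otimes_\B M$: $D$ does not respect the balancing relation $(hb)\otimes m=h\otimes(bm)$ precisely because $[D,b]\neq 0$ -- this is the very defect the connection $\nabla$ is introduced to repair, as your own Step 1 shows. Consequently there is no meaningful statement ``$D_M-D\otimes\id_M$ is relatively bounded'', and even informally, $hm\mapsto h\nabla(m)$ is bounded only for each fixed $m$, with no uniform control over the module, so no Kato--Rellich argument is available at this level of generality. What your Steps 1--2 actually deliver is a densely defined symmetric operator; promoting it to a selfadjoint one requires either the argument of \cite{DS10} or the strategy this paper uses later in proposition \ref{propDomegaTn}: identify the twisted operator explicitly on $\H$ as $D+\sum_i J_0\omega_i^*J_0^{-1}\delta_i-Z'$ (formula \eqref{Domega}) and apply Kato--Rellich \emph{relative to $D$}, using boundedness of $\omega_i$, $Z'$ and relative boundedness of the $\delta_i$. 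You should either reproduce such a concrete realization or import the selfadjointness statement from \cite{DS10} rather than argue it abstractly.
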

\begin{proof}
See \cite{DS10}, proposition 4.7.
\end{proof}

\section{Projectable spectral triples and twisted Dirac operators}
Let $(\A,\H,D,J,\gamma,\{\delta_j\},\Gamma)$ be a projectable $\T^n$-equivariant real spectral triple, where $\A$ is a principal $H(\T^n)$-comodule algebra, with invariant subalgebra $\B$. Assume that the triple satisfies the isometric fibres condition. Then the construction discussed in the previous section can be used to build twisted Dirac operators from the horizontal Dirac operator $D_h$. The first thing we need is a suitable notion of connection.

\subsection{Operators of strong connection for the Dirac calculus}\label{4.1}

The Dirac operator of a spectral triple $(\A,\H,D)$ defines a first order differential calculus $\Omega^1_D(\A)$, given by the linear span of all operators of the form $a[D,b]$, $a,b\in\A$, the differential of $a$ being $da = [D,a]$. Generalizing \cite{DS10}, we say that $\Omega^1_D(\A)$ is \emph{compatible} with the de Rham calculus on $\T^n$ iff
$$\sum_j a_j[D,b_j] = 0 \quad \Rightarrow \quad \sum_j a_j \delta_i(b_j) = 0\quad \forall i=1,\ldots,n.$$
This compatibility condition is obtained \cite{ZPhD} by requiring that $\A$, with the calculus $\Omega^1_D(\A)$, is a quantum principal bundle with general calculus \cite{BM93}, compatible with the de Rham calculus on the Hopf algebra $H(\T^n)$. This allows us to give the following definition of strong connection (see \cite{ZPhD} for the relation between this definition and the ordinary notion of strong connection over a quantum principal bundle with general calculus \cite{BM93,Hajac96}).
\begin{defin}\label{sconnfamily}
 A family of $n$ 1-forms $\{\omega_i\}\subset\Omega^1_D(\A)$ is called a \emph{strong $\T^n$-connection} for the $\T^n$-bundle $\A$ if the following conditions hold:
\begin{description}
 \item[(i)] $\delta_j(\omega_i) = 0$ for any $i,j = 1,\ldots,n$;
 \item[(ii)] if $\omega_i = \sum_j p_jdq_j$, with $p_j,q_j\in A$, then $\sum_j p_j\delta_i(q_j) = 1$ and $\sum_j p_j\delta_l(q_j) = 0$ for $l\neq i$;
 \item[(iii)] $\forall a\in\A, \;\; \left(da - \sum_i \delta_i(a)\omega_i\right)\in\Omega^1_D(\B)\A$.
\end{description}
\end{defin}
A strong $\T^n$-connection $\{\omega_i\}$ defines an $\Omega^1_D(\A)$-valued strong connection form, in the sense of \cite{BM93,Hajac96}, by:
$$\omega(z^k) = \sum_{k=1}^n k_i\omega_i.$$

\subsection{Twisted Dirac operators}
We exploit strong connections to build twisted Dirac operators. 
We work with one of the triples $(\B,\H_0,D_0,j_0)$ constructed in sect.\ref{3} 
and we take $M = \A^{(k)}$, since it satisfies both the conditions (a) and (b). 
First we construct a $D_0$-connection on $\A^{(k)}$.
\begin{prop}
Let $\omega$ be a strong $\T^n$-connection defined by a family $\{\omega_i\}_{i=1,\ldots, n}\subseteq\Omega^1_D(\A)$. Then, for any $k\in\Z^n$, the map $\nabla_\omega : \A^{(k)}\rightarrow \Omega^1_D(\A)\A^{(k)}$ defined by
$$\nabla_\omega(a) = [D,a] - \sum_{i=1}^n k_ia\omega_i,$$
where both $a\in\A^{(k)}$ and $\nabla_\omega(a)$ are regarded as operators on $\H_0$ acting from the right, defines a $D_0$-connection over the left $\B$-module $\A^{(k)}$, where $D_0$ denotes the restriction of the horizontal Dirac operator $D_h$ to $\H_0$.
\end{prop}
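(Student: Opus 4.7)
The plan is to verify the two defining properties of a $D_0$-connection, namely that $\nabla_\omega$ takes values in $\Omega^1_{D_0}(\B)\A^{(k)}$ and that it satisfies the Leibniz rule $\nabla_\omega(ba) = [D_0,b]a + b\nabla_\omega(a)$ for $b\in\B$, $a\in\A^{(k)}$. Linearity is obvious, so only these two points require attention.

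For the target of $\nabla_\omega$, I would start from condition (iii) of Definition \ref{sconnfamily}: for any $a\in\A$, the element $da-\sum_i\delta_i(a)\omega_i$ lies in $\Omega^1_D(\B)\A$. When $a\in\A^{(k)}$ one has $\delta_i(a)=k_i a$, so this element is precisely $[D,a]-\sum_i k_i a\omega_i = \nabla_\omega(a)$. To refine the target from $\Omega^1_D(\B)\A$ to $\Omega^1_D(\B)\A^{(k)}$, I would use the $\T^n$-weight decomposition: $\Omega^1_D(\B)$ has weight zero (since $\B=\A^{(0)}$ and $[\delta_j,D]=0$), while both $[D,a]$ and each $a\omega_i$ have weight $k$ (the forms $\omega_i$ are horizontal by (i) of Definition \ref{sconnfamily}). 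Thus $\nabla_\omega(a)$ is the weight-$k$ component of an element of $\Omega^1_D(\B)\A = \bigoplus_j \Omega^1_D(\B)\A^{(j)}$, hence lies in $\Omega^1_D(\B)\A^{(k)}$. Finally, I would invoke the isometric fibres condition, specifically (e) of Definition \ref{fiberscond}: since $Z$ commutes with $\A$ and $[D_v,\delta_i]=0$ implies $D_v|_{\H_0}=0$, one has $[D,b]=[D_h,b]$ for every $b\in\B$, and upon restriction to $\H_0$ this yields $\Omega^1_D(\B)|_{\H_0}=\Omega^1_{D_0}(\B)$, so $\nabla_\omega(a)\in\Omega^1_{D_0}(\B)\A^{(k)}$ as operators from $\H_0$ into the appropriate range space.

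For the Leibniz rule, given $b\in\B$ and $a\in\A^{(k)}$, the product $ba$ again lies in $\A^{(k)}$, so I can compute
\begin{equation*}
\nabla_\omega(ba) = [D,ba] - \sum_i k_i (ba)\omega_i = [D,b]a + b[D,a] - b\sum_i k_i a\omega_i = [D,b]a + b\,\nabla_\omega(a).
\end{equation*}
Restricting to $\H_0$ and again replacing $[D,b]$ by $[D_0,b]$ via the isometric fibres condition, this is exactly $[D_0,b]a + b\,\nabla_\omega(a)$, as required.

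The only slightly subtle step is the first: one must be careful that the identification of $\Omega^1_D(\B)\A$ with $\bigoplus_j \Omega^1_D(\B)\A^{(j)}$ passes through the Dirac-calculus representation and that the restriction of these operators to $\H_0$ actually coincides with the differential calculus generated by $D_0$. This is where the isometric fibres hypothesis is essential; once that identification is made, everything else is a direct application of the strong connection property (iii) and the ordinary Leibniz rule for $[D,\cdot]$.
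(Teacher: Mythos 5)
Your argument is correct and is essentially the proof the paper delegates to \cite{DS10}, Proposition 5.4, adapted from $U(1)$ to $\T^n$: condition (iii) of Definition \ref{sconnfamily} together with the $\T^n$-weight decomposition places $\nabla_\omega(a)$ in $\Omega^1_D(\B)\A^{(k)}$, and the isometric fibres condition identifies $[D,b]$ with $[D_0,b]$ on $\H_0$, after which the Leibniz rule is the ordinary one for $[D,\cdot]$. Two small citation slips: $D_v|_{\H_0}=0$ is hypothesis (a) of Definition \ref{fiberscond} (condition (c) only ensures $D_v$ preserves $\H_0$), and condition (i) of Definition \ref{sconnfamily} says the $\omega_i$ are $\T^n$-invariant (weight zero) rather than ``horizontal''; with these references corrected the proof stands as written.
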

\begin{proof}
 The proof is the same as that of proposition 5.4 in \cite{DS10}.
\end{proof}
\begin{prop}
The $D_0$-connection $\nabla_\omega$ is hermitian if all the $\omega_i$ are selfadjoint (as operators on $\H$).
\end{prop}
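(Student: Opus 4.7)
The plan is to verify the two defining conditions of a hermitian $D_0$-connection (Definition~\ref{hermitDconn}) for $\nabla_\omega$ on $M = \A^{(k)}$, closely paralleling the argument of \cite{DS10} in the $n=1$ case.

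First, for condition (i), I would exploit the identification of $a \in \A^{(k)}$ with the bounded right $\B$-linear map $\H_0 \to \H_k$ given by $h \mapsto Ja^*J^{-1}h$, i.e.\ right multiplication in the sense of \eqref{rightaction}. For $a_1,a_2\in\A^{(k)}$, a direct calculation then yields $m_1^\dag\circ m_2 = J(a_1 a_2^*)J^{-1}$ as an operator on $\H_0$. Since $a_1 a_2^* \in \A^{(k)}\A^{(-k)} \subset \A^{(0)} = \B$, this operator lies in $J\B J^{-1}$, so (i) holds; note that this part does not use the selfadjointness hypothesis on the $\omega_i$.

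For condition (ii), I would substitute the defining formula $\nabla_\omega(a) = [D,a] - \sum_i k_i a\omega_i$ into
$$h\nabla_\omega(m_2)m_1^\dag - hm_2\nabla_\omega(m_1)^\dag = (D_0 h)m_2m_1^\dag - D_0(hm_2m_1^\dag),$$
and expand. Using $h[D,b] = D(hb) - (Dh)b$ together with the isometric fibres decomposition $D = D_v + D_h + Z$ (which implies $D_0$ acts as $D$ on $\H_0$ up to a bounded piece $Z$ commuting with $\A$, cf.\ Definition~\ref{fiberscond}), the $[D,a_j]$-contributions reassemble into the right-hand side. The remaining $\omega$-dependent contributions amount to
$$-\sum_i k_i\,h\,a_2\omega_i a_1^\dag \;+\; \sum_i k_i\,h\,a_2\omega_i^\dag a_1^\dag,$$
where the second sum arises from $\nabla_\omega(a_1)^\dag = [D,a_1]^\dag - \sum_i k_i\,\omega_i^\dag a_1^\dag$. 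When each $\omega_i$ is selfadjoint, $\omega_i^\dag = \omega_i$, so these terms cancel pairwise and condition (ii) is verified.

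The main obstacle will be the bookkeeping of sign and adjoint conventions coming from the right actions via $J$ --- in particular the formula $h\omega = -J\omega^*J^{-1}h$ for $\omega \in \Omega^1_D(\B)$ and the relation between $[D,a]^\dag$ as a bounded operator and $-[D,a^*]$ as a form. Once these conventions are fixed as in \cite{DS10} and one uses that $\delta_j(\omega_i) = 0$ (so each $\omega_i$ preserves the spectral decomposition of $\H$ along the fibres), the computation becomes a routine extension of \cite[Prop.~5.5]{DS10} to $n$ commuting connection forms, with the selfadjointness of each $\omega_i$ individually ensuring the cancellation of the corresponding pair of terms.
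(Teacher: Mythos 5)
Your proposal is correct and follows essentially the same route as the paper: condition (i) via the identification $m_1^\dag\circ m_2 = J(a_1a_2^*)J^{-1}$ with $a_1a_2^*\in\A^{(0)}=\B$, and condition (ii) by direct expansion of $\nabla_\omega$, where the Leibniz rule absorbs the $[D,\cdot]$ terms and the leftover $\sum_i k_i\,a_2(\omega_i^\dag-\omega_i)a_1^\dag$ vanishes by selfadjointness of the $\omega_i$. If anything, your verification of (i) and your attention to the $D$ versus $D_0$ bookkeeping are slightly more explicit than the paper's own two-line argument, which is modeled on Prop.\ 5.5 of \cite{DS10}.
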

\begin{proof}
We have to check (i) and (ii) of definition \ref{hermitDconn}. Since we have taken $M = \A^{(k)}$ acting on $\H_0$ on the right via $h a = Ja^*J^{-1}h$, and since $J$ maps $\A$ into its commutant, then (i) is fulfilled. For what concerns (ii), we proceed by direct computation: let $a_1,a_2\in\A^{(k)}$ and $h\in\H_0$; then, using \eqref{homega}, we get:
\begin{equation*}
\begin{split}
h&\bigg(\nabla_\omega(a_2)a_1^\dag - a_2\nabla_\omega(a_1)^\dag - (Dh)a_2a_1^\dag + D(ha_2a_1^\dag) \bigg) \\
&= h\bigg([D,a_2] - \sum_{i=1}^n k_ia_2\omega_i\bigg)a_1^\dag - h\bigg(a_2\bigg([D,a_1] - \sum_{i=1}^n k_ia_1\omega_i\bigg)^\dag\bigg) - h[D,a_2a_1^\dag] \\
& = h\bigg(\sum_{i=1}^n k_ia_2(\omega_i^\dag - \omega_i)a_1^\dag\bigg),
\end{split}
\end{equation*}
which vanishes if $\omega_i^\dag = \omega_i$.
\end{proof}

Now, we can identify, up to completion, $\H_0\A^{(k)}$ with $\H_k$; then the construction discussed in the previous section gives us a family of spectral triples $(\B,\H_k,D_\omega^{(k)}$), $k\in\Z^n$, where each $D_\omega^{(k)}$ is the twisted Dirac operator constructed using the connection $\nabla_\omega$ on $\A^{(k)}$. Taking $D_\omega$ to be the closure of the direct sum of the Dirac operators of this family we obtain a twisted Dirac operator $D_\omega$, acting on (a dense domain of) the whole Hilbert space $\H$.
\begin{prop}\label{propDomegaTn}
The twisted Dirac operator $D_\omega$ is selfadjoint if all the $\omega_i$ are selfadjoint one-forms, and it has bounded commutators with all the elements of $\A$.
\end{prop}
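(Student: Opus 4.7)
The plan is to split the statement into two parts: selfadjointness of $D_\omega$, and boundedness of the commutators $[D_\omega, a]$ for $a\in\A$. The first part follows almost directly from the construction of section \ref{3.4}, while the second requires a short computation exploiting the isometric fibres condition.

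For selfadjointness, the two preceding propositions show that $\nabla_\omega$ is a hermitian $D_0$-connection on each $\A^{(k)}$, so the construction recalled at the end of section \ref{3.4} (taken from \cite{DS10}) produces a selfadjoint operator $D_\omega^{(k)}$ on a dense domain of $\H_k$. Since $D_\omega$ is defined as the closure of the algebraic direct sum $\bigoplus_k D_\omega^{(k)}$, and the closure of a direct sum of selfadjoint operators (on the natural $\ell^2$-type domain) is selfadjoint by a standard argument, the first part is settled.

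For bounded commutators, the key step is to derive, for each $l\in\Z^n$, a closed-form expression for $D_\omega$ restricted to $\H_l$. Starting from $D_\omega^{(l)}(hm) = (D_0 h)m + h\nabla_\omega(m)$ for $h\in\H_0$, $m\in\A^{(l)}$, substituting $\nabla_\omega(m) = [D,m] - \sum_i l_i m\omega_i$, using the identity $h[D,m] = D(hm)-(Dh)m$ (the right action of 1-forms induced by $J$, cf.\ \eqref{homega}), the decomposition $D|_{\H_0}=D_0+Z$ coming from condition (a) of the isometric fibres definition, and condition (f) to convert $(Zh)m$ into $Z'(hm)$, one arrives (by density) at
$$D_\omega\,\psi \;=\; D\psi - Z'\psi - \sum_{i=1}^n l_i\,\psi\omega_i,\qquad \psi\in\H_l,$$
where $\psi\omega_i := -J\omega_i^*J^{-1}\psi$ is the right action of $\omega_i$; property (i) of strong connections together with $\delta_j J = -J\delta_j$ ensures that this right action preserves the decomposition $\H = \bigoplus_l \H_l$. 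Then for homogeneous $a\in\A^{(k)}$, using that the right action of $\omega_i$ commutes with the left action of $a$ (because $J\omega_i^*J^{-1}$ sits in the commutant of $\pi(\A)$, by the first-order condition applied to the generating 1-forms of $\omega_i$), a direct calculation on $\H_l$ yields
$$[D_\omega, a]\,\psi \;=\; [D,a]\,\psi - [Z',a]\,\psi - \sum_{i=1}^n k_i\,(a\psi)\omega_i.$$
The crucial observation is that the coefficient $k_i$ depends only on $a$, not on $l$, so each summand is a bounded operator with bound uniform in $l$: $[D,a]$ by the original spectral triple axiom, $[Z',a]$ as a commutator of two bounded operators, and the $\omega_i$-term as the composition of the bounded operator $a$ with the bounded right action $\psi\mapsto\psi\omega_i$ scaled by the fixed integer $k_i$. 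The general case $a\in\A$ follows by decomposing $a$ as a finite sum of homogeneous components.

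The main obstacle will be the bookkeeping in establishing the restriction formula for $D_\omega|_{\H_l}$, in particular making the right action of an arbitrary $\omega\in\Omega^1_D(\A)$ (not just of $\Omega^1_D(\B)$ as in \eqref{homega}) rigorous at the operator level, verifying that it commutes with the left action of $\A$, and carefully using conditions (a)--(f) of the isometric fibres definition to convert $Z$ into $Z'$. Once these identifications are in place, selfadjointness follows from the standard direct-sum argument and boundedness of the commutators reduces to inspecting the three manifestly bounded terms above.
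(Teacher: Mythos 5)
Your proposal is correct, and for the commutator half it is in substance the paper's own computation: your restriction formula $D_\omega\psi = D\psi - Z'\psi - \sum_i l_i\,\psi\omega_i$ on $\H_l$ (with $\psi\omega_i=-J\omega_i^*J^{-1}\psi$) is exactly the paper's closed form \eqref{Domega}, $D_\omega = D + \sum_i J_0\omega_i^*J_0^{-1}\delta_i - Z'$, written fibre by fibre with $\delta_i$ replaced by its eigenvalue $l_i$; both arguments then bound $[D_\omega,a]$ by the same three observations ($[D,a]$ bounded, $J\omega_i^*J^{-1}$ in the commutant of $\pi(\A)$ via the first order condition so the middle term only contributes $\sum_i J\omega_i^*J^{-1}\delta_i(a)$, and $Z'$ bounded), your per-homogeneous-component bookkeeping being equivalent to the paper's use of $\delta_i(a)$. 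Where you genuinely diverge is selfadjointness: the paper deduces it from the global formula via Kato--Rellich, invoking relative boundedness of the $\delta_i$ with respect to $D$, whereas you invoke selfadjointness of each fibrewise operator $D_\omega^{(k)}$ (Proposition 4.7 of \cite{DS10}, applicable since $\nabla_\omega$ is a hermitian $D_0$-connection when the $\omega_i$ are selfadjoint) and then the standard fact that the closure of a direct sum of selfadjoint operators is selfadjoint. Your route is arguably more robust, since Kato--Rellich requires the perturbation $\sum_i J_0\omega_i^*J_0^{-1}\delta_i - Z'$ to be $D$-bounded with relative bound strictly less than one, a point the paper does not verify and which is not automatic for large $\omega_i$; the direct-sum argument sidesteps this and uses only what is already established in section \ref{3.4}. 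What the paper's route buys is the explicit global expression for $D_\omega$ on all of $\H$, reused later (e.g.\ for the compatibility discussion) --- but you derive the same expression anyway for the commutator estimate, so nothing essential is lost. The remaining bookkeeping you flag (extending the right action \eqref{homega} to 1-forms in $\Omega^1_D(\A)$, the choice of $j_0$ versus $\Gamma j_0$, and the use of conditions (a) and (f) of definition \ref{fiberscond} to trade $Z$ for $Z'$) is treated at the same level of rigour, with the same footnoted caveat on $J_0$, in the paper itself.
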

\begin{proof}
We compute the action of $D_\omega$ on an element $hp$ in its domain, with $h\in\H_0$ and $p\in\A^{(k)}$ (we use \eqref{homega} for the right action of one-forms, where $J_0$ stands either\footnote{If $D_0'$ - see tables in the previous section - is simply $D_0$ then we take $J_0 = j_0$; if, instead, $D_0' = \Gamma D_0$, we take $J_0 = \Gamma j_0$.} for $j_0$ or $\Gamma j_0$):
\begin{equation}\label{Domega}
\begin{split}
D_\omega(hp) & = (D_0h)p + h[D,p] - \sum_{i=1}^n k_ihp\omega_i \\
& = (D_0h)p + [D,J_0p^*J_0^{-1}]h + \sum_{i=1}^nJ_0\omega_i^*J_0^{-1}k_ihp \\
& = D(hp) + ((D_0 - D)h)p + \sum_{i=1}^nJ_0\omega_i^*J_0^{-1}h\delta_i(p) \\
& = \left(D + \sum_{i=1}^n J_0\omega_i^*J_0^{-1}\delta_i - Z'\right)(hp).
\end{split}
\end{equation}
Now, the Dirac operator $D$ and the derivations $\delta_i$ are selfadjoint, $Z$ and $\omega$ are bounded and selfadjoint; moreover, any $\delta_i$ is relatively bounded with respect to $D$. Then, by Kato-Rellich theorem, $D_\omega$ is selfadjoint on $\H$.

Next, $D$ has bounded commutator with each $a\in\A$ and, since any $\omega_i$ is a one-form, from the first order condition (which holds also for the triple $(\B,\H_0,D_0)$, see proposition \ref{propTnodd}) the commutator of the second term with $a$ is $\sum_i J_0\omega_i^*J_0^{-1}\delta_i(a)$ and hence is bounded. The third term of \eqref{Domega} gives commutators between bounded operators, since $Z$ is bounded, and thus it gives only bounded terms. Therefore $[D_\omega,a]$ is bounded for each $a\in\A$.
\end{proof}
\begin{prop}
Let $D_v$ 
be as in definition \ref{fiberscond}. Define
$$\mathcal{D}_\omega = D_v + D_\omega.$$
Then $(\A,\H,\mathcal{D}_\omega)$ is a projectable spectral triple with isometric fibres, and the horizontal part of the operator $\mathcal{D}_\omega$ coincides with $D_\omega$.
\end{prop}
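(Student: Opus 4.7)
The plan is to use the explicit formula for $D_\omega$ derived inside the proof of Proposition \ref{propDomegaTn}, namely
$$D_\omega \;=\; D \;-\; Z' \;+\; \sum_{i=1}^n J_0\,\omega_i^*\,J_0^{-1}\,\delta_i,$$
so that $\mathcal{D}_\omega = D_v + D_\omega$ becomes $D$ plus a $D$-controlled perturbation. The three spectral-triple axioms for $(\A,\H,\mathcal{D}_\omega)$ then follow from standard perturbative arguments: selfadjointness by Kato--Rellich (the $Z'$ term is bounded; each $\delta_i$ is relatively $D$-bounded with arbitrarily small bound; and $D_v = D - D_h - Z$ is controlled since $Z$ is bounded and $D_h$ has bounded commutators), compact resolvent because $\mathcal{D}_\omega$ preserves the decomposition $\H = \bigoplus_k \H_k$ and on each $\H_k$ reduces to $D|_{\H_k}$ plus a bounded operator, and bounded commutators with $\A$ via $[\mathcal{D}_\omega, a] = [D_v, a] + [D_\omega, a]$: the second is bounded by Proposition \ref{propDomegaTn}, while the first equals $[D,a] - [D_h,a]$ (since $[Z,a]=0$), with $[D_h, a] = \tfrac12 \Gamma\bigl([D,a]\Gamma \pm \Gamma[D,a]\bigr)$ manifestly bounded.

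Projectability of $(\A, \H, \mathcal{D}_\omega)$ with respect to the same grading $\Gamma$ is automatic: the conditions on $\Gamma$ in Definitions \ref{defTnprojodd} and \ref{defTnprojeven} involve only $\Gamma$, $\A$, $\delta_j$, $J$ and $\gamma$, none of which has changed.

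For the isometric-fibres condition on $\mathcal{D}_\omega$ I would take the decomposition $\mathcal{D}_\omega = D_v + D_\omega$, with vertical part $D_v$ as before, horizontal part $D_\omega$, and bounded remainder absorbed into the $Z$-type piece via the difference $Z - Z'$. Conditions (a) and (c) of Definition \ref{fiberscond} are inherited literally, (b) is unchanged, and (d)--(f) follow termwise from the boundedness of $\omega_i$, $Z$ and $Z'$ combined with condition (f) of the original triple. The claim that the horizontal part of $\mathcal{D}_\omega$ coincides with $D_\omega$ is the conceptually critical point: applying $\tfrac12\Gamma[\,\cdot\,, \Gamma]_\pm$ to $\mathcal{D}_\omega = D_v + D_\omega$, the $D_v$ summand is annihilated by the $\Gamma$-commutation condition (b), so the horizontal projection reduces to that of $D_\omega$, which collapses to $D_\omega$ itself after tracking the $\Gamma$-parities of its summands.

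The main obstacle is this last bookkeeping step: verifying, in every KR-dimension, that each piece of $D_\omega$---the horizontal part $D_h$ of $D$, the bounded difference $Z - Z'$, and the twisting term $\sum_i J_0\omega_i^*J_0^{-1}\delta_i$---has the correct $\Gamma$-parity for $D_\omega$ to be purely horizontal. This is case-by-case work using the tables of Section \ref{3.3} to fix $J_0 \in \{j_0, \Gamma j_0\}$, together with the hypothesis $\delta_j(\omega_i) = 0$ and the construction of $\omega_i \in \Omega^1_D(\A)$ from commutators $[D, \cdot]$, which forces $J_0 \omega_i^* J_0^{-1}$ to share the $\Gamma$-parity of $D_h$.
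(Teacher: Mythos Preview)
The paper does not actually prove this proposition: it merely cites proposition~5.8 of \cite{DS10}. Your sketch therefore supplies considerably more detail than the paper itself, and the overall architecture---perturbative selfadjointness via Kato--Rellich, projectability being automatic because the $\Gamma$-axioms do not mention the Dirac operator, and extracting the horizontal part by applying $\tfrac12\Gamma[\,\cdot\,,\Gamma]_\pm$---is the right one and agrees with how \cite{DS10} proceeds in the $n=1$ case.

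There is, however, a genuine gap in your compact-resolvent step. You claim that on each $\H_k$ the operator $\mathcal{D}_\omega$ equals $D|_{\H_k}$ plus a bounded perturbation. Using the formula $D_\omega = D - Z' + \sum_i J_0\omega_i^*J_0^{-1}\delta_i$ one finds
\[
\mathcal{D}_\omega - D \;=\; D_v \;+\;\sum_i J_0\omega_i^*J_0^{-1}\delta_i \;-\; Z',
\]
and on $\H_k$ the last two summands are indeed bounded, but nothing in Definition~\ref{fiberscond} forces $D_v|_{\H_k}$ to be bounded: the axioms only say $D_v|_{\H_0}=0$, $[D_v,\delta_i]=0$, and a $\Gamma$-parity condition. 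In every example in the paper $D_v$ happens to be $\sum_a c^a\delta_a$ with bounded $c^a$, which gives what you want, but as stated the argument is incomplete. Even granting boundedness on each $\H_k$, you also need to say why the direct sum has compact resolvent (the $k$-dependent bounds must be controlled against the growth of the spectrum of $D|_{\H_k}$).

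On your ``main obstacle'': the $\Gamma$-parity of $D_\omega$ can be handled more uniformly than a full case-by-case sweep of the tables. The point is that $D_v + \sum_i J_0\omega_i^*J_0^{-1}\delta_i$ is, by the strong-connection normalisation (ii) of Definition~\ref{sconnfamily}, a purely \emph{horizontal} 1-form acting from the right (the vertical piece of each $J_0\omega_i^*J_0^{-1}$ cancels against the corresponding piece of $D_v$, as one sees explicitly in the $\T^3_\theta$ computation). Once this cancellation is isolated, $D_\omega$ is $D_h$ plus a right-acting horizontal 1-form, and both share the $\Gamma$-parity of $D_h$ because $\Gamma$ commutes with $\A$ and $J\Gamma=\pm\Gamma J$.
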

\begin{proof}
See proof of proposition 5.8 in \cite{DS10}.
\end{proof}

As in \cite{DS10} we introduce the following notion of \emph{compatibility}.
\begin{defin}\label{defcomp}
We say that a strong connection $\omega$ is \emph{compatible} with a Dirac operator $D$ if 
$D_\omega$ and $D_h$ coincide on a dense subset of $\H$.
\end{defin}

\section{Spin geometry of principal $\T^n$-bundles}\label{6}

In this section we shall show that, under suitable hypotheses, the classical $\T^n$-bundles satisfy
 (as they should) the assumptions, and possess all the structures, discussed above in the 
noncommutative setup. Let $M$ be an $(m+n)$-dimensional oriented compact smooth manifold 
that is the total space of a principal $\T^n$-bundle over the $m$-dimensional oriented manifold 
$N = M/\T^n$. 
Assume that $M$, $N$ are Riemannian manifolds, with metric tensors, respectively, $\wt{g}$ 
and $g$ such that:
\begin{description}
\item[-] the action of $\T^n$ is isometric w.r.t. $\wt{g}$;
\item[-] the bundle projection $\pi : M\rightarrow N$ is an orientation preserving 
Riemannian submersion;
\item[-] the fibres are isometric one to each other; moreover, the length of each fundamental vector
 field $K_a$ is constant along $M$.
\end{description}
Denoting by $\{T_a\}_{a=1,\ldots,n}$ the canonical basis of the Lie algebra of $\T^n$, we assume 
each $K_a$ to be the fundamental vector field associated to $T_a$. The last assumption could be 
weakened (as in the case of $U(1)$ bundles \cite{A98,AB98}) but such a more general situation 
would be more difficult to treat in the noncommutative case, and won't be considered here.

Under these assumptions, there is a unique principal connection 1-form 
$\omega : TM\rightarrow \t_n$ such that $\ker\omega$ 
is orthogonal to the fibres, at any point of $M$, with respect to the metric 
$\wt{g}$. If $\{T_a\}_{a=1,\ldots,n}$ is the canonical basis of the Lie algebra of $\T^n$, 
then $\omega$ will be of the form
$$\omega = \sum_{a=1}^n \omega_a\otimes T_a,$$
where each $\omega_a$ is a $\C$-valued 1-form on $M$. Next, for any vector field $X$ on $N$ 
we shall denote by $\wt{X}$ its horizontal lifting. Consider now a (local) oriented orthonormal frame 
$f = \{f_1,\ldots,f_m\}$ on $N$. Then, if we set 
\begin{equation*}
\left\{\begin{array}{lcl}
e_a =  {1\over l_a}K_a & \quad & a=1,\ldots,n, \\
e_{j+n} = \wt{f_j} & \quad & j=1,\ldots m,
\end{array}\right.
\end{equation*}
where $l_j$ are real positive constants, then $e = \{e_k\}_{k=1,\ldots,n+m}$ is a (local) orthonormal frame on $M$ (oriented with respect to a suitably chosen orientation).

Assume now that $M$ is a spin manifold, and let $\Sigma M$ be its spinor bundle. 
We also assume the $\T^n$ action lifts to an action $\T^n\times\Sigma M\rightarrow\Sigma M$,
in which case we shall speak of \emph{projectable} spin structure. 
A projectable spin structure on $M$ induces a spin structure on $N$ (this is a 
straightforward consequence of the analogue property for the $U(1)$ case \cite{AB98}). 
Then the Dirac operator $\wt{D}$, acting on $L^2$-sections of $\Sigma M$, will be the following one:
$$\wt{D} = \sum_{i=1}^{n+m} \gamma^i\de_{e_i} + {1\over 4}\sum_{i,j,k=1}^{n+m}
 \wt{\Gamma}^k_{ij}\gamma^i\gamma^j\gamma^k,$$
where the $\gamma^j$ are the gamma matrices, associated to the orthonormal frame $\{e_j\}$,
generating the action of the $(n+m)$-dimensional Clifford algebra and $\wt{\Gamma}^k_{ij}$ are the 
Christoffel symbols of the Levi-Civita connection on $TM$ for the frame $\{e_j\}$. Using the
letters $a,b,c...$ to denote indices from 1 to $n$ and the letters $i,j,k...$ to denote indices from
 $n+1$ to $n+m$, we have:
$$\wt{\Gamma}^k_{ij} = \Gamma^k_{ij},$$
$$-\wt{\Gamma}^a_{ij} = \wt{\Gamma}^j_{ia} = \wt{\Gamma}^j_{ai} = {l_a\over 2}d\omega_a(e_i,e_j),$$
$$\wt{\Gamma}^a_{ib} = \wt{\Gamma}^a_{bi} = \wt{\Gamma}^i_{ab} = \wt{\Gamma}^a_{bc} = 0,$$
where the $\Gamma^k_{ij}$ are the Christoffel symbols of the Levi-Civita connection on $TN$, 
with respect to the frame $f$. Before going on, we notice that the Lie derivative with respect 
to each Killing vector field differs from the spinor covariant derivative by:
$$\nabla_{e_a} = \de_{e_a} + {l_a\over 4} \sum_{j<k} d\omega_a(e_j,e_k)\gamma^j\gamma^k.$$
Now we want to express the Dirac operator $\wt{D}$ as a sum of two first order operators plus 
a zero order term. The first operator, which we shall call the \emph{vertical Dirac operator}, 
is given by:
$$D_v = \sum_{a=1}^n {1\over l_a}\gamma^a\de_{K_a} = \sum_{a=1}^n \gamma^a\de_{e_a}.$$
In order to construct the second operator we split the Hilbert space 
$L^2(\Sigma M)$ into irreducible representations of $\T^n$:
$$L^2(\Sigma M) = \bigoplus_{k\in\Z^n} V_k.$$
Next, for simplicity we discuss only the case when both $m$ and $n$ are even, 
the other cases can be obtained in a similar way. 
For any $k\in\Z^n$ consider the
irreducible representation on $\C$ with weight $k$ of $\T^n$ 
and form the associated complex bundle 
$L_k = M\times_{\T^n}\C$. Moreover, endow it with the connection $i\omega$. 
Let $\Sigma_m$ denote the $m$-dimensional spinor representation.
\begin{prop}
For each $k\in\Z^n$ there is an isomorphism
$$Q_k : L^2(\Sigma N\otimes L_k)\otimes \Sigma_m \rightarrow V_k$$
such that the horizontal covariant derivatives, with respect to the vector fields $\wt{f_i}$, are given by
$$\nabla_{\wt{f_i}}Q_k(\psi) = 
Q_k(\nabla_{f_i}\psi) + \sum_{j=n+1}^{n+m}
\sum_{a=1}^n {l_a\over 4}d\omega_a(e_i,e_j)\gamma(K_a/l_a)\gamma(e_j)Q_k(\psi).$$
Moreover, Clifford multiplication is preserved, i.e.
$$Q_k(\gamma(X)\psi) = \gamma(\wt{X})Q_k(\psi).$$
\end{prop}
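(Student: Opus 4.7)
The plan is to construct $Q_k$ from the spin-geometric decomposition of $TM$ into vertical and horizontal subbundles, and then verify the two assertions by direct computation in the adapted frame $\{e_a,\wt{f_j}\}$ introduced above. Since both $m$ and $n$ are even, $\mathrm{Cl}(n+m)\cong \mathrm{Cl}(n)\otimes\mathrm{Cl}(m)$ and accordingly the spinor representation factors as $\Sigma_{n+m}\cong \Sigma_n\otimes\Sigma_m$. Combined with the reduction of the structure group of $TM=T^vM\oplus T^hM$ from $\mathrm{Spin}(n+m)$ to a double cover of $\mathrm{Spin}(n)\times\mathrm{Spin}(m)$---which exists because the spin structure is projectable---this identifies $\Sigma M$ with $\Sigma^v\otimes \pi^*\Sigma N$, where $\Sigma^v$ is the (trivial, since the fibres are flat isometric tori) vertical spinor bundle, on which the $\gamma^a$ act as Clifford generators.

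A section of $\Sigma M$ of $\T^n$-weight $k$ is the same datum as a $\T^n$-equivariant section of weight $k$ of that tensor bundle; by the standard correspondence between $\T^n$-equivariant maps of weight $k$ and sections of $L_k=M\times_{\T^n}\C$, this translates into a section of $\Sigma N\otimes L_k\otimes \Sigma_m$, and the map $Q_k$ is precisely this identification (after taking $L^2$-completions). Preservation of Clifford multiplication is then tautological on the horizontal factor: the Clifford action of $\wt{X}$ on $\Sigma M$ becomes $\gamma(X)\otimes 1$ under the decomposition, which is exactly what $Q_k(\gamma(X)\psi)$ produces on the right-hand side.

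For the covariant-derivative formula I substitute the Christoffel symbols listed just before the proposition into the standard expression $\nabla^{\Sigma M}_{e_I}=\de_{e_I}+\tfrac{1}{4}\sum_{J,K}\wt{\Gamma}^K_{IJ}\gamma^J\gamma^K$. The purely base contribution $\tfrac{1}{4}\sum_{j,k}\Gamma^k_{ij}\gamma^j\gamma^k$ yields the spinor connection of $N$ and, together with the Leibniz rule applied to the factor $L_k$ equipped with the connection $i\omega$, assembles the term $Q_k(\nabla_{f_i}\psi)$; the key observation here is that $\omega(\wt{f_i})=0$ since $\ker\omega$ is horizontal, so $L_k$ contributes no additional piece along $\wt{f_i}$. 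The mixed symbols satisfy $\wt{\Gamma}^a_{ij}=-\tfrac{l_a}{2}d\omega_a(e_i,e_j)=-\wt{\Gamma}^j_{ia}$, and the sum $\tfrac{1}{4}\wt{\Gamma}^a_{ij}\gamma^j\gamma^a+\tfrac{1}{4}\wt{\Gamma}^j_{ia}\gamma^a\gamma^j$, after using $\gamma^j\gamma^a=-\gamma^a\gamma^j$, collapses to precisely $\tfrac{l_a}{4}d\omega_a(e_i,e_j)\gamma^a\gamma^j$, which is the claimed correction; the vanishing $\wt{\Gamma}^b_{ia}=0$ rules out any further term.

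The main technical obstacle is the first step: establishing the tensor factorisation $\Sigma M\cong \pi^*\Sigma N\otimes\Sigma^v$ compatibly with the $\T^n$-action requires a careful treatment of projectable spin structures (and, for the parity cases not discussed here, of the appropriate $\mathrm{Pin}$ or $\mathrm{Spin}^c$ covers). Once this structural identification is in place the rest is essentially a substitution exercise in the normal-form expression for the spin connection on a Riemannian submersion with totally geodesic isometric fibres.
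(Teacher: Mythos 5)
Your proposal is correct and follows essentially the same route as the paper: realize $\Sigma M$ and $\Sigma N$ as associated bundles, use $\Sigma_{n+m}\cong\Sigma_n\otimes\Sigma_m$ (both $m$ and $n$ even) together with projectability to identify weight-$k$ spinors with sections of $\Sigma N\otimes L_k$ tensored with the remaining spinor factor, and then obtain the derivative formula by substituting the listed Christoffel symbols into the spin connection --- which is exactly the ``direct computation, cfr.\ lemma 4.4'' that the paper delegates to \cite{AB98}. The only point treated more casually than in that reference is the claim that the horizontal Clifford action becomes $\gamma(X)\otimes 1$ ``tautologically'': the correct intertwining involves the usual graded-tensor/vertical-chirality bookkeeping (so that horizontal and vertical gamma matrices anticommute), but since base gammas enter the connection terms only in pairs this does not affect your computation.
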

\begin{proof}
We can write $\Sigma M = SM\times_{\mathrm{Spin}(n+m)}\Sigma_{n+m}$ and 
$\Sigma N = SN\times_{\mathrm{Spin}(m)}\Sigma_m$ where $SM$, $SN$ are, respectively, 
the principal $\mathrm{Spin}(m+n)$- and $\mathrm{Spin}(m)$-bundles defining the spin structures 
of the two manifolds and $\Sigma_{n+m}$, $\Sigma_n$ are the canonical spin representations 
of the spin groups. Then, since we assumed both $m$ and $n$ even, we have: 
$\Sigma_{n+m} = \Sigma_n\otimes\Sigma_m$. 
Then the proposition follows by direct computations, cfr. the proof of lemma 4.4 in \cite{AB98}.
\end{proof}
Then one can see, by direct computation, that, if we define the \emph{horizontal Dirac operator}, 
on each $V_k$, by
$$D_h = Q_k\circ (D\otimes\id)\circ Q_k^{-1},$$
where $D$ is the (twisted) Dirac operator on $\Sigma N\otimes L_k$, then $Z = \wt{D} - D_v - D_h$ 
is a zero order operator, which takes the form
$$Z = -{1\over 4}\sum_{a=1}^n l_a\gamma(K_a/l_a)\gamma(d\omega_a).$$

Now, it is clear that with $\Gamma =  \gamma^1 \gamma^2 \dots \gamma^n$,
where $\gamma^a, a=1,\dots, n$ are the gamma matrices associated to the (vertical part) of 
the orthonormal frame $\{e_j\}$, $(C^\infty(M), L^2(\Sigma), \wt{D})$ is a projectable spectral triple 
in our sense, and $D_h$ comes as above.
Next, with $D_v$ and $Z$ as above the 
fibres are isometric.
Moreover, identifying the usual differential (de Rham) calculus with the Dirac calculus $\Omega^1_{\wt{D}}(\C^\infty(M))$,  we observe that the 
connection  $\omega$ is of course strong 
(as is any connection in the usual sense) and that it is compatible with the Dirac operator $\wt{D}$.
In addition  $\omega$ is also compatible with the the Dirac operator 
$\mathcal{D}_\omega = D_v + D_\omega$
(which however corresponds to a metric connection possibly with nonzero torsion).

\section{Noncommutative tori. Example of $\T^3_\theta$ as quantum principal 
$\T^2$-bundle}\label{sectexT3theta}

Noncommutative tori are most studied examples of noncommutative spaces
(see e.g. \cite{Bondia}, chapter 12) on the  topological, smooth and metric level.
The underlying algebra $\A=\A(\T^k_\theta)$ of the $k$-dimensional noncommutative torus
is generated by $n$ unitaries $U_1,\ldots,U_k$ with the relations
$$U_iU_j = e^{2\pi\theta_{ij}}U_jU_i,$$
where $\theta = (\theta_{ij})$ is an $k\times k$ skewsymmetrix real matrix. 
Of course, for $\theta = 0$, we recover the coordinate algebra of the usual $k$-torus $\T^k$.

There is an action of $\T^k$ on $\A(\T^k_\theta)$ which on the Lie algebra level is 
given by the commuting derivations
\begin{equation}\label{LietnactTntheta}
\delta_i(U_j) = \delta_{ij}U_j.
\end{equation}
where $\delta_1,\ldots,\delta_k$ are standard generators of $\t_k = \mathrm{Lie}(\T^k)$.
\\
Next on the Hilbert space 
$\H=L^2(\T^{k}_\theta,\tau)\otimes \C^{2^{[k/2]}}$,
where $\tau$ is the usual trace on $\A(\T^{k}_\theta)$ and $[\,\, ]$ denote the integer part.
The standard Dirac operator is defined by the formula 
$$D=\sum_j^{m+n} \gamma_j \delta_j ,$$
where the $2^{[n/2]}\times 2^{[n/2]}$ gamma matrices are irreducible representations 
of the Clifford algebra $Cl(m+n)$,
and the derivations $\delta_j$ have been implemented on $\H$ (via commutators).

We observe that using (part of) this action any $(n+m)$-dimensional noncommutative torus 
can be viewed as a principal $\T^n$-bundle over an $m$-dimensional noncommutative torus. 
Indeed, the last $m$ generators, $U_1,\ldots,U_n$, of $\T^{n+m}_\theta$ 
generate the noncommutative torus $\T^m_{\theta'}$, where $\theta'$ is the $m\times m$ 
lower--right block submatrix of $\theta$.
The algebra $\A(\T^m_{\theta'})$ is indeed just the invariant subalgebra $\B=(\A(\T^{n+m}_\theta))^{coH}$ of the coaction
$\Delta_R$ of $H = H(\T^n)$ on $\A(\T^{n+m}_\theta)$, defined by
\begin{equation*}
 \begin{array}{lcl}
  \Delta_R(U_j) = U_j\otimes z_j & \quad & j\leq n, \\
  \Delta_R(U_{n+j}) = U_{n+j}\otimes 1 .& & 
 \end{array}
\end{equation*}
Of course it is the same as invariant subalgebra of the action of $\T^n$:
\begin{equation*}
 \begin{array}{lcl}
  \beta: U_j\mapsto z_j U_j & \quad & j\leq n, \\
  \beta: U_{n+j} \mapsto U_{n+j} .& & 
 \end{array}
\end{equation*}

The Dirac differential calculus $\Omega^1_D(\T^{n+m}_\theta)$ coincides with the so called derivative
 based calculus and can be easily seen to be compatible with the de Rham calculus on $\T^n$,
(which in fact is also isomorphic to the Dirac calculus on $\T^n$).

Moreover $\T^{n+m}_\theta$ is a $\T^n$ quantum principal bundle in our sense and we can give 
explicit formula for arbitrary strong connection (for the Dirac calculus).
\begin{prop}\label{propTnthetasconn}
$\omega : H\rightarrow \Omega^1_D(\T^{n+m}_\theta)$ is a strong connection form iff
for any $k\in\Z^n$,
\begin{equation}\label{eqTnthetasconn}
 \omega(z^k) = \sum_{i=1}^n\sum_{j=1}^m k_ib_{ij}\otimes \gamma^j 
+ \sum_{i=1}^n k_i\otimes \gamma^{m+i},
\end{equation}
with $b_{ij}\in\T^m_{\theta'}$.
\end{prop}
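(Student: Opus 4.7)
The plan is to unwind the three conditions of Definition~\ref{sconnfamily} using the explicit structure of the Dirac calculus on $\A(\T^{n+m}_\theta)$, and read off the shape of each $\omega_i$ coefficient by coefficient in the gamma matrices.

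First I would establish a normal form for 1-forms. Since each $\gamma^k$ commutes with every $U_l$ (they act on different tensor factors of $\H$) and $[D,U_k]=\gamma^k U_k$, the identity $U_k^{*}\,dU_k=\gamma^k$ holds in $\Omega^1_D(\A(\T^{n+m}_\theta))$. Combined with the linear independence of the gamma matrices and the compatibility of the Dirac calculus with the de Rham calculus (recalled just before the proposition), this gives a unique decomposition of any 1-form as $\sum_{k=1}^{n+m}a_k\gamma^k$ with $a_k\in\A(\T^{n+m}_\theta)$; in particular the map sending a 1-form to its coefficient list is well defined.

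Writing a candidate $\omega_i=\sum_{k=1}^{n+m}c_{ik}\gamma^k$ I would then apply the three conditions in order. Since $\delta_r(\gamma^k)=0$ and derivations act diagonally on the coefficients, condition (i) forces $c_{ik}\in\B=\A(\T^m_{\theta'})$ for every $i,k$. Writing $\omega_i=\sum_{j=1}^m c_{ij}U_j^{*}\,dU_j+\sum_{l=1}^n c_{i,m+l}U_{m+l}^{*}\,dU_{m+l}$ and applying condition (ii) with $\delta_r$ kills the base contributions (because $U_j\in\B$ for $j\le m$ annihilates $\delta_r$) and reduces the fibre contributions to $c_{i,m+r}=\delta_{ir}$. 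Setting $b_{ij}:=c_{ij}$ for $j\le m$ then yields $\omega_i=\sum_j b_{ij}\gamma^j+\gamma^{m+i}$, which via $\omega(z^k)=\sum_i k_i\omega_i$ is exactly \eqref{eqTnthetasconn}.

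For the converse direction I would verify condition (iii) by direct computation: the expression $da-\sum_i\delta_i(a)\omega_i$ expands to $\sum_{j=1}^m(\wt\delta_j(a)-\sum_i\delta_i(a)b_{ij})\gamma^j$, which lies automatically in $\Omega^1_D(\B)\A=\{\sum_{j=1}^m d_j\gamma^j:\ d_j\in\A\}$ (the latter equality using $1\in\B$); conditions (i) and (ii) are equally straightforward from $b_{ij}\in\B$ and the computation above. The main obstacle is essentially bookkeeping: one must carefully distinguish the full set of $n+m$ derivations appearing in $D$ from the $n$ fibre-derivations $\delta_i$, and invoke the compatibility of the two calculi to ensure that the coefficient extraction inside condition (ii) depends only on the 1-form itself and not on the presentation $\sum p_j\, dq_j$ chosen for it.
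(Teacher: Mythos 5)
Your argument is correct and is essentially the computation the paper intends: the proposition itself is stated without proof, but the paper's proof of the analogous lemma for $\T^3_\theta$ proceeds exactly as you do (expand $\omega_i=\sum_k c_{ik}\gamma^k$ in the derivation-based calculus, use condition (i) of definition \ref{sconnfamily} to force $c_{ik}\in\B$, condition (ii) to fix the vertical components, with (iii) then automatic). The only point to watch is the one you already flag as bookkeeping: your labelling of fibre versus base generators follows the proposition's convention (vertical gammas $\gamma^{m+i}$), which differs from the indexing used for the coaction earlier in that section, but this is a harmless relabelling rather than a gap.
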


Moreover it can be shown that the standard spectral triple (with the 'flat' Dirac operator) on $\A$
is projectable and the horizontal Dirac operator is again the standard Dirac operator on $\B$.
One can discuss also the twisted Dirac operators and the compatibility issue.
This was accomplished for the case of  $U(1)=\T^1$-bundles  in \cite{DS10} and \cite{DSZ13}.
Here we shall describe next simplest case of $\T^2$-bundle.

\subsection{$\T^3_\theta$ as a quantum principal $\T^2$-bundle over $\T^1$}

We specify now $m=1$ and $n=2$, so the invariant subalgebra $\B$ is simply the algebra generated by $U_3$, and hence it is isomorphic to trigonometric polynomials generated by 
$z$ and $z^{-1}$.
As mentioned before $\B\hookrightarrow\A$ is a quantum principal $\T^2$-bundle in our sense
(i.e. $\A$ is a principal $H(\T^2)$-comodule algebra). 

We consider now the following spectral triple over $\A$ \cite{Ven10}. 
Let $\H_\tau$ denote the GNS Hilbert space associated to the usual trace 
$\tau$ on $\A$.
Set $\H = \H_\tau\otimes\C^2$. Next, define a Dirac operator by:
$$D = \sum_{j=1}^3 \sigma^j\delta_j,$$
where the $\sigma^j$ are the Pauli matrices. The real structure $J$ can be defined as follows: 
if $J_0$ is the Tomita-Takesaki involution on $\H_\tau$ and $c.c.$ denotes the complex 
conjugation on $\C^2$ with respect to the standard basis of $\C^2$, 
then we define $$J = J_0\otimes (i\sigma^2\circ c.c.).$$

Then $(\A,\H,D,J)$ is an odd real spectral triple, of $KR$-dimension 3, on $\A$. 
It is straightforward to check that it is $\T^2$-equivariant \cite{Ven10}. 
Moreover, the Dirac calculus $\Omega^1_D(\A)$ is easily seen to be compatible with the
 de Rham calculus on $\T^2$. Next, we have the following result.
\begin{prop}
 $(\A,\H,D,J)$ is a projectable spectral triple, with isometric fibres. Moreover, the operator $\Gamma$ can be taken equal to $\pm \sigma^3$.
\end{prop}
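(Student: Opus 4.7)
\medskip
\noindent\textbf{Proof plan.} The statement has two parts: that $(\A,\H,D,J)$ is projectable with $\Gamma = \pm\sigma^3$ (in the sense of Definition \ref{defTnprojodd}), and that the isometric fibres condition (Definition \ref{fiberscond}) holds. Since the spectral triple is odd, of $KR$-dimension $3$, with $n=2$ and base $m=1$, I would first verify the four projectability conditions on $\Gamma=\pm\sigma^3$. Self-adjointness and $\Gamma^2=\id$ are immediate; commutation with $\pi(\A)$ follows because $\A$ acts via $\pi(a)\otimes\id$ on $\H_\tau\otimes\C^2$ while $\Gamma$ acts only on the $\C^2$ factor; commutation with each $\delta_j$ is clear for the same reason. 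The only point to check is the sign: since $m=1$, Definition \ref{defTnprojodd} demands $J\Gamma = -\Gamma J$. Using $J=J_0\otimes(i\sigma^2\circ c.c.)$ and the anticommutation $\sigma^2\sigma^3=-\sigma^3\sigma^2$, together with the fact that $\sigma^3$ is real (so commutes with $c.c.$), a one-line computation gives the required sign.

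Next I would compute $D_h$ explicitly. Since $n=2$ is even, formula \eqref{Dhodd} prescribes $D_h = \tfrac12\Gamma[D,\Gamma]_+ = \tfrac12(\Gamma D\Gamma + D)$. Conjugation by $\sigma^3$ flips the sign of $\sigma^1,\sigma^2$ and preserves $\sigma^3$, so the $\sigma^1\delta_1$ and $\sigma^2\delta_2$ terms in $D$ cancel while the $\sigma^3\delta_3$ term is doubled, giving $D_h = \sigma^3\delta_3$. This forces the natural candidate for the decomposition:
\begin{equation*}
D_v := \sigma^1\delta_1 + \sigma^2\delta_2, \qquad Z := 0, \qquad D = D_v + D_h + Z.
\end{equation*}

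Finally I would verify the six isometric-fibres axioms for $(D_v,Z)$. Axiom (a) holds because $\H_0$ is the common kernel of $\delta_1,\delta_2$, on which $D_v$ vanishes term by term. For axiom (b), with $n=2$ even, we need $\{D_v,\Gamma\}=0$, which follows from $\{\sigma^1,\sigma^3\}=\{\sigma^2,\sigma^3\}=0$. Axiom (c) reduces to $[\delta_i,\delta_j]=0$, i.e.\ the mutual commutativity of the generators of the $\T^3$-action, which is standard. Axioms (d)--(f) are trivial with $Z=0$. Assembling these checks yields the proposition.

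\medskip
\noindent\textbf{Main obstacle.} There is no serious obstacle: every condition reduces to a Pauli-matrix identity together with the commutativity of the derivations $\delta_j$. The only subtle point is making sure the sign convention $J\Gamma=-\Gamma J$ (rather than $+\Gamma J$) required for the $KR$-dimension-$3$ odd case comes out correctly; this hinges on keeping track of the antilinearity of $c.c.$ and the reality of $\sigma^3$ when moving $c.c.$ past $\sigma^3$, so this is the one spot where care with signs is needed.
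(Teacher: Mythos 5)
Your proposal is correct and follows essentially the same route as the paper: take $\Gamma=\pm\sigma^3$, check the algebraic conditions of Definition \ref{defTnprojodd} (with the sign $J\Gamma=-\Gamma J$ coming from the reality of $\sigma^3$ and $\sigma^2\sigma^3=-\sigma^3\sigma^2$), compute $D_h=\tfrac12\Gamma[D,\Gamma]_+=\sigma^3\delta_3$, and conclude isometric fibres with $D_v=\sigma^1\delta_1+\sigma^2\delta_2$ and $Z=Z'=0$. Your write-up merely spells out the Pauli-matrix checks that the paper leaves implicit.
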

\begin{proof}
 Take $\Gamma = \sigma^3$ (the proof is the same for $\Gamma=-\sigma^3)$. Then $\Gamma^2 = 1$, $\Gamma^* = \Gamma$ and it commutes both with the representation of $\A$ and with the derivations $\delta_1$, $\delta_2$. Moreover, since $\sigma^2\sigma^3 = -\sigma^3\sigma^2$ and $J_0\sigma^3 = \sigma^3J_0$, we have: $J\Gamma = -\Gamma J$. Hence $\Gamma$ satisfies all the requirements of definition \ref{defTnprojodd}. It follows that $(\A,\H,D,J)$ is a projectable $\T^2$-equivariant real spectral triple.
 
 Next, according to the definitions in sect. \ref{3}, the horizontal Dirac operator is given by:
 $$D_h = {1\over 2}\Gamma[D,\Gamma]_+ = \sigma^3\delta_3.$$
 It follows that $D = D_h + D_v$, where $D_v = \sigma^1\delta_1 + \sigma^2\delta_2$; hence $(\A,\H,D,J)$ enjoys the isometric fibres property, with $Z = Z' = 0$.  
\end{proof}

Now let $\H_0$ denote the common $0$-eigenspace of $\delta_1$ and $\delta_2$. According to the results of the previous sections, we set $D_0 = D_h|_{\H_0}$ and $D_0' = \Gamma D_0$. Then the real structure $j_0$ is given by the restriction of $\Gamma J$ to $\H_0$. In particular, $j_0 = (J_0\otimes(\sigma^1\circ c.c.))|_{\H_0}$. Then $(\B,\H_0,D'_0,j_0)$ is a real spectral triple of $KR$-dimension 1.

\subsection{Twisted Dirac operators}

Now we can twist the horizontal Dirac operator $D_h$. We begin by working out a general form for strong $\T^2$-connections over $\A$.
\begin{lemma}
 Any selfadjoint strong $\T^2$-connection over $\A$, in the sense of definition \ref{sconnfamily}, is defined by two selfadjoint 1-forms $\omega_1,\omega_2\in\Omega^1_D(\A)$ of the form
 $$\omega_1 = \sigma^1 + \sigma^3\omega^3_1,$$
 $$\omega_2 = \sigma^2 + \sigma^3\omega^3_2,$$
 with $\omega_i^3 = (\omega_i^3)^*\in\B$.
\end{lemma}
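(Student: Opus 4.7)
The plan is to parametrize an arbitrary element of $\Omega^1_D(\A)$ and then impose the three clauses of Definition \ref{sconnfamily} one by one. Since $[D,a] = \sum_{j=1}^{3} \sigma^j \delta_j(a)$ and the Pauli matrices $\sigma^j$ act on the $\C^2$ tensor factor of $\H = \H_\tau \otimes \C^2$ and therefore commute with the representation of $\A$, any sum $\sum_k a_k [D,b_k]$ rearranges into $\omega = A_1\sigma^1 + A_2\sigma^2 + A_3\sigma^3$ with $A_j \in \A$. Conversely, since $U_j^{-1}[D,U_j] = \sigma^j$, every such expression lies in $\Omega^1_D(\A)$; and since $\{\sigma^1,\sigma^2,\sigma^3\}$ is linearly independent in $M_2(\C)$, the coefficients $A_j$ are uniquely determined by $\omega$. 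I would then write $\omega_i = A_1^{(i)}\sigma^1 + A_2^{(i)}\sigma^2 + A_3^{(i)}\sigma^3$ for $i=1,2$.

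Next I would process the conditions (i)--(iii). The equivariance condition (i) says $\delta_1(\omega_i) = \delta_2(\omega_i) = 0$; since the $\sigma^j$ are fixed by every $\delta_l$, this forces $A_j^{(i)} \in \ker\delta_1 \cap \ker\delta_2 = \B$ for each $j$. For (ii), writing $\omega_i = \sum_k p_k [D,q_k]$, one reads off $A_l^{(i)} = \sum_k p_k \delta_l(q_k)$, so (ii) amounts to $A_l^{(i)} = \delta_{il}$ for $l=1,2$, leaving only $A_3^{(i)} \in \B$ unconstrained; set $\omega_i^3 := A_3^{(i)}$. Condition (iii) is then automatic: for any $a \in \A$,
$$[D,a] - \sum_{i=1}^{2}\delta_i(a)\,\omega_i \;=\; \sigma^3\,\delta_3(a) - \sigma^3\sum_{i=1}^{2}\delta_i(a)\,\omega_i^3 \;=\; \sigma^3 \Bigl(\delta_3(a) - \sum_{i=1}^{2}\delta_i(a)\,\omega_i^3\Bigr),$$
and since $\sigma^3 = U_3^{-1}[D,U_3]$ with $U_3 \in \B$, we have $\sigma^3 \in \Omega^1_D(\B)$ and the whole expression lies in $\Omega^1_D(\B)\A$.

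It remains to match selfadjointness of $\omega_i$ with selfadjointness of the scalar coefficient $\omega_i^3$. As $\omega_i^3 \in \B$ acts on the $\H_\tau$ factor while $\sigma^3$ acts on the $\C^2$ factor, they commute, so $\omega_i^* = \sigma^i + \sigma^3 (\omega_i^3)^*$; hence $\omega_i^* = \omega_i$ is equivalent to $(\omega_i^3)^* = \omega_i^3$. None of these steps is really the hard part: once $\Omega^1_D(\A)$ is identified with the free right $\A$-module generated by $\{\sigma^1,\sigma^2,\sigma^3\}$, everything reduces to Clifford-basis bookkeeping. The only substantive ingredient is that $\sigma^3$ itself already belongs to the sub-calculus $\Omega^1_D(\B)$, which is what ties the statement to the fact that the invariant subalgebra $\B$ is generated by $U_3$ and is what makes condition (iii) trivially satisfied.
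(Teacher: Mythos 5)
Your proof is correct and follows essentially the same route as the paper: expand each $\omega_i$ in the Pauli basis $\sigma^1,\sigma^2,\sigma^3$ with coefficients in $\A$, use condition (i) to force the coefficients into $\B$, use condition (ii) (via $\sigma^j = U_j^{-1}[D,U_j]$) to fix the $\sigma^1,\sigma^2$ coefficients to $\delta_{il}$, and match selfadjointness of $\omega_i$ with that of $\omega_i^3$. Your extra check that condition (iii) then holds automatically because $\sigma^3\in\Omega^1_D(\B)$ is a welcome detail that the paper's proof leaves implicit, but it does not change the argument.
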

\begin{proof}
 Any 1-form $\eta\in\Omega^1_D(\A)$ can be written in the following way: $\eta = \displaystyle \sum_{j=1}^3 \sigma^j\eta_j$, with $\eta_j\in\A$. Hence we write for $i=1,2$,
 $$\omega_i = \sum_{j=1}^3 \sigma^j\omega_i^j,$$
 with $\omega_i^j\in\A$. Imposing condition (i) of definition \ref{sconnfamily} we obtain that each $\omega_i^j$ has to belong to $\B$. Next noticing that each $\sigma^j$ corresponds to the (universal) 1-form $U_j^{-1}dU_j$,
the condition (ii) implies that $\omega_i^j = \delta_{ij}$ (for $i,j=1,2$). Finally, all the $\omega_i^j$ must be selfadjoint, since we are requiring the strong connection to be selfadjoint.
\end{proof}

For $k\in\Z^2$, let now $\A^{(k)}$ denote the subalgebra of $\A$ of homogeneous elements of degree $k$. Then the connection $\omega$ allows us to define a $D_0$ connection on each $\A^{(k)}$:
$$\nabla_\omega : \A^{(k)}\rightarrow\Omega^1_D(\A)\A^{(k)},$$
$$\nabla_\omega(a) = [D,a] - \sum_{i=1}^2 k_ia\omega_i.$$
By direct computation we obtain then, for any $a\in\A^{(k)}$, that:
$$\nabla_\omega(a) = \sigma^3\delta_3(a) - k_1\sigma^3a\omega_1^3 - k_2\sigma^3a\omega_2^3.$$
Before computing the twisted Dirac operator $D_\omega$, we recall the following fact: the real structure we shall use here is $\wt{j} = \Gamma j_0 = J$ (see the proof of proposition \ref{propDomegaTn}). Then, from equation \eqref{Domega}, we obtain:
$$D_\omega = \sigma^3\delta_3 - \sigma^3J\omega_1^3J^{-1}\delta_1 - \sigma^3J\omega_2^3J^{-1}\delta_2.$$
Thus if $\omega_1^3=0=\omega_2^3$, $D$ is compatible with $\omega$.
Another  ``three-dimensional" Dirac operator $\mathcal{D}_\omega$ 
can be obtained simply adding $D_v$ to $D_\omega$, it however in 
a sense corresponds to a connection possibly with torsion.

Notice that the spectral triple $(\A,\H,D_\omega)$ is not irreducible. Indeed, $\sigma^3$ commutes with $\A$ and with $D$. The reason for this is that the triple $(\B,H_0,D_0)$ is reducible: it is the direct sum of two copies (with opposite orientation) of the canonical spectral triple over the circle $S^1$. This is an expected phenomenon: indeed, in the classical (commutative case), passing from dimension 3 to dimension 1, the rank of the spinor bundle decreases and so each fibre of the spinor bundle on the total space splits as a direct sum of (two) representations of the 1-dimensional Clifford algebra.
\begin{rmk}
$D_\omega$ is the twist of the horizontal Dirac operator $D_h$; that is, of the operator $D_0$
 (given in sect. \ref{3.3}).
Instead the twist of the operator $D'_0$ can be obtained simply multiplying $D_\omega$ 
by $\Gamma$.
\end{rmk}

\section{Theta deformations}

We can combine the results of last two sections to construct a theta deformation 
of spectral triples on principal $\T^n$-bundles.
If $M$ is as in sect. \ref{6} with $m\geq 2$, then there is
a Rieffel deformation quantization $M_\theta$ \cite{R} by 'gluing a noncommutative 
torus' along the action $\alpha$ of $\T^m$, and this extends to $\alpha$-equivariant
spectral triples over $M$ \cite{cl01}.
We briefly recall  an equivalent but more `functorial'  realization \cite{CD-V} of the theta deformed
spectral triple $(C^\infty(M_{\theta}),\H_{\theta}, D_{\theta})$.
Up to a `splitting'  isomorphism 
\begin{equation}\label{splitiso}
C^\infty(M_{\theta}) \approx 
\left(
C^\infty(M)\widehat\otimes C^\infty(\Tb^2_{\theta})
\right)^{ \alpha\otimes \beta^{-1}},
\end{equation}
$$
\H_\theta := \left(L^2(M,\Sigma) \widehat\otimes L^2( \Tb^2_{\theta})\right)
^{\alpha\otimes\beta^{-1}}
$$
and  $D_{\theta}$ is the closure of $D\otimes I$,
where on the right hand sides we have 
the fixed point subalgebra or submodule of the 
action $\alpha\otimes\beta^{-1}$ of $\Tb^m$
and $\widehat\otimes$ denotes a suitable completion of $\otimes$.
Similarly the antilinear charge conjugation operator $J$ can be theta-deformed as
$$J_{\theta}=J\otimes \ast .$$
As shown in \cite{CD-V} the spectral triple $(C^\infty(S^{3}_{\theta}),\H_{\theta}, D_{\theta})$ 
together with the real structure $J_{\theta}$ 
satisfies all additional seven axioms of Connes
required for a 'noncommutative manifold'.\\

It is also not difficult to verify that the theta deformation
(sometimes called 'twisting') behaves 'functorially' under the maps between 
manifolds
(in particular under bundle projection) and respects the properties of principal $\T^n$-bundles.
We give now more details of these affirmations.
For simplicity we assume that the action of the structure group $\T^n$ coincides 
with the isometric action  on $M$ of the first factor in  $\T^n\times\T^m=\T^{n+m}$,
thus the quotient manifold $N$ still carries an isometric action of the second factor $\T^m$. 
The theta deformation $M_\theta$ constructed with the $(n+m)\times (n+m)$ matrix $\theta$ of 
parametrers  is then a principal $\T^n$-bundle over the theta deformation $N_{\theta'}$ constructed 
with the $m\times m$ matrix $\theta'$, where  $\theta'$ is the $m\times m$ 
lower--right block submatrix of $\theta$.
The right action of $\T^n$ on $M_\theta$ is just $\alpha\otimes\id$ and the bundle inclusion 
$N_\theta \hookrightarrow M_\theta$ reads  $p^*\otimes\id $, where $p^*$ is  the pullback bundle
 projection $p:M\to N$. \\

The spectral triple $(M_\theta, \H_\theta, D_\theta)$  is projectable, with 
$\Gamma_\theta= \Gamma \otimes\id $ and $(D_\theta)_h$ is just $D_{\theta'}$.
Moreover $\Omega^1_{D_\theta}(M_\theta)$ is compatible with the de Rham calculus on $\T^n$ and the requirements (a-b) in section \ref{3.4} can be seen to be satisfied.
Finally, $(\omega_\theta)_i:=\omega_i\otimes 1$ is a connection in the sense of definition \ref{sconnfamily} compatible with $D_\theta$, if $\omega_i$ is a connection compatible
with $D$. 
 
\smallskip

\end{document}